\newcommand{\BA}{\mathbb{A}}
\newcommand{\BC}{\mathbb{C}}
\newcommand{\R}{\mathbb{R}}
\newcommand{\F}{\mathbb{F}}
\newcommand{\bF}{\mathbf{F}}
\newcommand{\Q}{\mathbb{Q}}
\newcommand{\Z}{\mathbb{Z}}
\newcommand{\Fbar}{{\overline{\F}}}
\newcommand{\calO}{\mathcal{O}}
\newcommand{\cl}{\mathrm{Cl}}
\newcommand{\fa}{\mathfrak{a}}
\newcommand{\fb}{\mathfrak{b}}
\newcommand{\fd}{\mathfrak{d}}
\newcommand{\fp}{\mathfrak{p}}
\newcommand{\cc}{\mathfrak{c}}
\newcommand{\fP}{\mathfrak{P}}
\newcommand{\fq}{\mathfrak{q}}
\newcommand{\vp}{\mathring{p}}
\newcommand{\tp}{\tilde{p}}
\newcommand{\leg}[2]{\big(\frac{#1}{#2}\big)}
\DeclareMathOperator{\End}{\mathrm{End}}
\DeclareMathOperator{\Gal}{\mathrm{Gal}}
\DeclareMathOperator{\Trd}{\mathrm{Trd}}
\DeclareMathOperator{\disc}{\mathrm{disc}}
\DeclareMathOperator{\Hom}{\mathrm{Hom}}
\DeclareMathOperator{\Ell}{\mathrm{Ell}}
\DeclareMathOperator{\Iso}{\mathrm{Iso}}
\DeclareMathOperator{\rk}{\mathrm{rk}}
\newtheorem{theorem}{Theorem}[section]
\newtheorem{lemma}[theorem]{Lemma}
\newtheorem{corollary}[theorem]{Corollary}
\newtheorem{proposition}[theorem]{Proposition}
\theoremstyle{definition}
\newtheorem{remark}[theorem]{Remark}
\numberwithin{equation}{subsection}
\begin{document}
%%%%%%%%%%%%%%%%%%%%%%%%%%%%%%%%%%%%%%%%%%%%%%%5TITLE STUFF%%%%%%%%%%%%%%%%%%%%%%%%%%%%%%%%%%%%%%%%%%%%%%%%%%%%%%%%%%%%%%%%%%%%%	
\title[Factorization of Hilbert class polynomials over $\F_p$]{Factorization of Hilbert class polynomials over prime fields}
	
\author{Jianing Li}
\address{CAS Wu Wen-Tsun Key Laboratory of Mathematics, University of Science and Technology of China,  Hefei, Anhui 230026,  China}
\email{lijn@ustc.edu.cn}

\author{ Songsong Li}
\address{School of Electronic Information and Electrical Engineering, Shanghai Jiao Tong University, Shanghai 200240,  China}
\email{songsli@sjtu.edu.cn}

\author{Yi ouyang}
\address{CAS Wu Wen-Tsun Key Laboratory of Mathematics, University of Science and Technology of China,  Hefei, Anhui 230026,  China}
\email{yiouyang@ustc.edu.cn}

\subjclass[2020]{11A51, 11G15, 11R37, 11R65, 11T71, 94A60}
\keywords{Hilbert class polynomials, Imginary quadratic orders, Supersingular elliptic curves,  Isogeny-based cryptography}

\maketitle	
\begin{abstract} Let $D$ be a negative integer congruent to $0$ or $1\bmod{4}$ and $\mathcal{O}=\mathcal{O}_D$ be the corresponding order of $ K=\Q(\sqrt{D})$. The  Hilbert class polynomial $H_D(x)$ is the minimal polynomial of the $j$-invariant $ j_D=j(\mathbb{C}/\mathcal{O})$ of $\mathcal{O}$ over $K$. Let $n_D=(\mathcal{O}_{\Q( j_D)}:\Z[ j_D])$ denote the index of $\Z[ j_D]$ in the ring of integers of $\Q(j_D)$. Suppose $p$ is any prime. We completely determine the factorization of $H_D(x)$ in $\mathbb{F}_p[x]$ if either $p\nmid n_D$ or $p\nmid D$ is inert in $K$ and the $p$-adic valuation  $v_p(n_D)\leq 3$. As an application, we analyze the key space of Oriented Supersingular Isogeny Diffie-Hellman (OSIDH) protocol proposed by Col\`o and Kohel in 2019 which is the roots set of the Hilbert class polynomial in $\mathbb{F}_{p^2}$.
\end{abstract}

%%%%%%%%%%%%%%%%%%%%%%%%%%%%%%%%%%%%%%%%%%%%%%%%%%%%%%%%%%%%%%%%%%%%%%%%%%%%%%%%%%%%%%%%%%%%%%%%%%%%%%%%%%%%%	
\section{Introduction}\label{sec: introduction}	
%%%%%%%%%%%%%%%%%%%%%%%%%%%%%%%%%%%%%%%%%%%%%%%%%%%%%%%%%%%%%%%%%%%%%%%%%%%%%%%%%%%%%%%%%%%%%%%%%%%%%%%%%%%%%

Let $D$ be a negative integer congruent to $0,1\bmod{4}$. Then $D$  is the discriminant of a unique order $\calO$ in an imaginary quadratic field $K$ (which is the field $\Q(\sqrt{D})$). The $j$-invariant of the complex elliptic curve $E_{\calO}=\BC/\calO$, denoted by $ j_D=j_{\calO}$, is an algebraic integer. Its minimal polynomial $H_D(x)\in \Z[x]$ over $K$ is called the Hilbert class polynomial, whose splitting field $L$ is  the ring class field of $\calO$.  
For a prime $p$ which does not split in $K$, the reduction of $E_{\calO}$ modulo prime ideals lying above $p$ in $L$ is supersingular over $\F_{p^2}$. This fact gives the close connection of Hilbert class polynomial and the isogeny-based cryptography. The goal of this paper to study the factorization of $H_D(x)$ modulo a prime $p$ and explore the application to the isogeny-based cryptography.  
 \subsection{Motivation}
The endomorphism rings of supersingular elliptic curves over $\Fbar_p$ are maximal orders in the quaternion algebra $B_{p,\infty}$ which is ramified  only at $p$ and $\infty$. Moreover, Deuring\cite{Deuring} proved that there is a one-to-one correspondence between isomorphism classes of maximal orders in $B_{p,\infty}$ and isomorphism classes of supersingular elliptic curves up to the action of $\Gal(\F_{p^2}/\F_p)$.

Computing endomorphism rings of supersingular elliptic curves is a central problem in isogeny-based cryptography. The best-known isogeny-based cryptographic protocol is Supersingular Isogeny Key Encapsulation(SIKE), which is a Round $3$ alternate candidate in the  NIST Post-Quantum Cryptography Standardization Project based on  Supersingular Isogeny Diffie-Hellman protocol(SIDH) proposed by De Feo and Jao\cite{SIDH} (also see \cite{DJP14}). Another popular protocol is  CSIDH (i.e. Commutative SIDH) proposed by Castryck, Lange, Martindale, Panny, and Renes\cite{CLMPR18} in 2018. CSIDH uses the action of an ideal class group on the set of supersingular elliptic curves defined over $\F_p$. In 2019, Col\`o and Kohel generalized CSIDH to OSIDH (i.e. Oriented SIDH), using a general ideal class group of an imaginary quadratic order $\calO$ and its action on the set of primitive $\calO$-oriented supersingular elliptic curves. One of the motivations for OSIDH is to enlarge its key space to $\Omega(p)$ by including all isomorphism classes of supersingular elliptic curves.   

The fundamental problem in the isogeny-based cryptography is finding isogenies between two supersingular  elliptic curves.  As pointed out in\cite{CPV20,EHL18}, computing isogenies can be reduced to the problem of computing the endomorphism rings of supersingular elliptic curves. One approach to compute the endomorphism rings is to find cycles in the isogeny graph of supersingular elliptic curves.
There are some works \cite{Kohel,GPS2016,EHL20} to analyze the algorithm complexities
based on different kinds of heuristic assumptions. Another approach is to construct the list of Deuring's correspondence by computing the elliptic curves whose endomorphism rings are isomorphic to given maximal orders in $B_{p,\infty}$.  

Dorman\cite{Dorman} and Ibukiyama\cite{Ibukiyama} described some isomorphic classes of maximal orders in $B_{p,\infty}$. More precisely, for any prime $q$ satisfying $q\equiv 3 \bmod 8$ and $\bigl(\frac{p}{q}\bigr)=-1$, Ibukiyama gave two kinds of maximal orders $\calO(q,p)$ and $\calO'(q,p)$, and proved that they are isomorphic to the endomorphism rings of some supersingular elliptic curves which are defined over $\F_p$. Given such an order $R=\calO(q,p)$ or $\calO'(q,p)$ in $B_{p,\infty}$, assume $j(R)\in\F_{p}$ is the unique $j$-invariant under Deuring's correspondence. Denote $R^{T}=\{2x-\Trd(x) \ \mid\ x\in R\}$. Then by \cite[Theorem 5.1]{CG14}, $d>4$ is represented optimally by $R^T$ with multiplicity $m$ if and only if $j(R)$ appears as a root of the Hilbert polynomial $H_{-d}(x)\in \F_p[x]$ with multiplicity $\epsilon m$, where $\epsilon=2$ if $p$ is ramified in $\Q(\sqrt{-d})$, and $1$ if $p$ is inert. Based on this fact, Chevyrev and Galbraith\cite{CG14} proposed an algorithm to determine $j(R)$ by computing the greatest common divisors of several Hilbert class polynomials over $\F_p[x]$. On the other hand, let $\calO=R\cap \Q(\sqrt{-q})$ and $D$ its discriminant. Then $\calO=\Z[\frac{1+\sqrt{-q}}{2}]$ or $\Z[\sqrt{-q}]$, and $D\in\{-q,-4q\}$. Castryck et al.\cite{CPV20} showed that there is exactly one root in $\F_p$ of $H_D(x)\bmod p$ if $D>-p$.  As a result, one can determine  $j(R)$ which is the unique $\F_p$-root of $H_D(x)$ modulo $p$.  However, it was shown in \cite[Theorem 1.3]{SYZ} that for any supersingular elliptic curve $E/\F_p$, the smallest prime $q$ such that $\End(E)\cong \calO(q,p)$ or $\calO'(q,p)$ is less than $10000p\log^6p$. For $D<-p$,  to our best knowledge,  the number of $\F_p$-roots of $H_D(x)$ is still unknown. 

Generally, let $\calO$ be an imaginary quadratic order of discriminant $D$. Suppose that $p$ is a prime which does not split in $K=\Q(\sqrt{D})$. By Deuring's reduction theorem\cite{Lang}, the roots of $H_D(x)$ modulo $p$ are supersingular $j$-invariants in $\F_{p^2}$. As we shall discuss in \S 5, the set of roots is just the key space of OSIDH if $D>-p$. In 2021, Xiao et al. \cite{GLY21} obtained the number of $\F_p$-roots of $H_D(x)$ modulo $p$ when $p\nmid D$ and $D>-\frac{4}{\sqrt{3}}\sqrt{p}$.  Motivated by these problems and their potential applications in the isogeny-based cryptography, we study the factorization of $H_D(x)$ over $\F_p[x]$.

\subsection{Our contributions} Let $D<0$ and $D\equiv 0,1\bmod{4}$. Let $K$ be the imaginary quadratic field $\Q(\sqrt{D})$, $\calO_K$  its ring of integers and $D_K$  its fundamental discriminant. Then $f=\sqrt{D/D_K}\in \Z$. The order $\calO=\Z+f\calO_K$ is the unique order in a quadratic field with discriminant $D$ (and conductor $f$).   Let $ j_D=j(E_{\calO})$,  $M=\Q( j_D)$, $L=K( j_D)$ and $n_D =(\calO_M:\Z[ j_D])$. Then $L$ is the splitting field of $H_D(x)$ over $K$ and is the ring class field of $\calO$ over $K$ as well. 

Assume $p$ is a prime.  We first study the prime factorization of $p\calO_M$ using properties of general dihedral groups $\Gal(L/\Q)$ and class field theory. Then we get the  factorization of Hilbert class polynomial $H_D(x)$ over $\F_p[x]$ if $p\nmid n_D$ in \S4. In the case  $p\mid  n_D $ and $\big(\frac{D_K}{p}\big)\neq 1$, we propose an approach to analyze the factorization of $H_D(x)$ modulo $p$, and determine the multiplicities of its irreducible factors if $p\nmid D$, $D>-p^3$ and $v_p( n_D )\leq 3$. 

By combining the selection of parameters proposed by Onuki\cite{Onuki} in 2021 for OSIDH protocol to work, we note that the roots set of the Hilbert class polynomial over $\F_{p^2}$ is just the key space of OSIDH up to $\Fbar_p$-isomorphism. And we point out the size of its key space is less than $O(\sqrt{p}\log p)$  which is contrary to the common belief in \cite{CK19} that it could be $\Omega(p)$.

\subsection{Paper organization} In section 2 we recall necessary backgrounds on number theory and elliptic curves. In section 3, we use results in general dihedral group to compute the factorization of $p\calO_M$. In section 4, we study the factorization of $H_D(x)$ over $\F_p[x]$, especially for primes $p$ dividing the discriminant of $H_D(x)$. In the last section, we give some analysis about the key space of OSIDH protocol.

\subsection*{Acknowledgments} The authors would like to thank Chaoping Xing for helpful comments on this article. Research is partially supported by Anhui Initiative in Quantum Information Technologies (Grant No. AHY150200), National Key R\&D Program of China (Grant NO. 2020YFA0712300) and NSFC (Grant No. 123031011).

%%%%%%%%%%%%%%%%%%%%%%%%%%%%%%%%%%%%%%%%%%%%%%%%%%%%%%%%%%%%%%%%%%%%%%%%%%%%%%%%%%%%%%%%%%%%%%%%%%%%%%%%%%%%%

\section{Preliminaries}
%%%%%%%%%%%%%%%%%%%%%%%%%%%%%%%%%%%%%%%%%%%%%%%%%%%%%%%%%%%%%%%%%%%%%%%%%%%%%%%%%%%%%%%%%%%%%%%%%%%%%%%%%%%%%

\subsection*{Conventions}
We shall adopt the following conventions:
\begin{enumerate}[1]
\item $p$ is always a prime, and the symbol $\left(\frac{a}{p}\right)$ is  the Legendre-Kronecker symbol, i.e., the Legendre symbol for odd $p$ and
\[ \left(\frac{a}{2}\right)=0, 1, -1\ \text{if}\ 2\mid a,\ a\equiv 1\bmod{8},\ a\equiv 5\bmod{8}\ \text{respectively}. \] 		
\item For a number field $\bF$, let $\calO_\bF$ be its ring of integers and $D_\bF$ be its discriminant. 
\item Suppose $\bF/\bF'$ is an extension of number fields, $\fp$  is a prime ideal of $\calO_{\bF'}$ above $p$ and   $\fP$  is a prime ideal of $\calO_\bF$ above $\fp$.  
\begin{enumerate}[i]
	\item Let $e_\fP(\bF/\bF')$  be the ramification index and $f_\fP(\bF/\bF')$ the inertia degree of $\fP$ in $\bF/\bF'$. 
	
	\item If $\bF/\bF'$ is a Galois extension, let $D_\fP(\bF/\bF')$ be the decomposition group and $I_\fP(\bF/\bF')$  the inertia group of $\fP$; in this case,  $e_{\fp}(\bF/\bF') = e_{\fP}(\bF/\bF')$ and $f{\fp}(\bF/\bF')=f_{\fP}(\bF/\bF')$ are independent of $\fP$ above $\fP$.
	
	\item In the case that $\bF/\Q$ and $\bF'/\Q$ are both Galois,   $e_p(\bF/\bF')=e_{\fP}(\bF/\bF')$ and   $f_p(\bF/\bF')=f_{\fP}(\bF/\bF')$ are independent of $\fP$ above $p$.
\end{enumerate}

\item For a prime $\fp\subset \calO_\bF$, the degree of $\fp$ is
 \[ \deg(\fp):=f_\fp(\bF/\Q).\] 
\end{enumerate}

\subsection{Orders in imaginary quadratic fields}\label{sec:class field} %See \cite[Chapter 2]{Cox89} for more information. 

Let $\calO$ be an order of discriminant $D$ in an imaginary quadratic field $K$,  then $K=\Q(\sqrt{D})$, the conductor $f=[\calO_K:\calO]$, $\calO=\Z+f\calO_K$ and $D=f^2 D_K<0$ satisfying $D\equiv 0,1\bmod{4}$. On the other hand, let $D<0$ and $D\equiv 0,1\bmod{4}$. Let $K=\Q(\sqrt{D})$. Then $f=\sqrt{D/D_K}$ is an integer and $\calO=\Z+f \calO_K$ is an order of discriminant $D$ in $K$ with $f$ its conductor. Hence the order $\calO$, the discriminant $D$ and the pair $(K,f)$ are mutually determined. We shall fix this correspondence from now on.

Let $I(\calO)$ be the set of all proper fractional ideals of $\calO$, then $I(\calO)$ is a group under multiplication and contains $P(\calO)$, the set of principal fractional $\calO$-ideals, as a subgroup. The ideal class group of $\calO$ is the quotient group $\cl(\calO)=I(\calO)/P(\calO)$, which is a finite abelian group. Let $h_D=h_{\calO}$ be its class number. 

The following well-known facts can be found in \cite[\S 7.]{Cox89}:
\begin{proposition}\label{prop:rclgp0}
 Let $D$, $\calO$, $f$ and $K$ be given as above. 
	
	\begin{enumerate}[1]
		\item The class number $h_D=h_{\calO}$ of $\cl(\calO)$ is given by
		\begin{equation} \label{eq:classno}
			h_D= \frac{h_K f}{[\calO_K^\times: \calO^\times]}\cdot \prod_{p\mid f} \left (1-\Bigl(\frac{D_K}{p}\Bigr)\frac{1}{p} \right ). \end{equation}
			
	\item 		Let $I_K(f)$ be the subgroup of fractional ideals of $K$ generated by primes not dividing $f$, and $P_{K,\Z}(f)$ be the subgroup of $I_K(f)$ given by 
	\[  P_{K,\Z}(f)=\{(\alpha) \in I_{K}(f)  \mid \alpha \in \calO_K, \alpha \equiv a \bmod f\calO_K \text{ with } a\in \Z, (a,f)=1 \}.\] 
	Then there is a canonical isomorphism:
	\begin{equation}\label{eq:clgp_iso}
		I_{K}(f)/P_{K,\Z}(f) \cong \cl(\calO),\quad [I] \mapsto [I\cap \calO].
	\end{equation}	
	\end{enumerate}
\end{proposition}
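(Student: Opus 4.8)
The plan is to establish (2) first and then extract (1) from an exact sequence whose middle term I can count; since both are the standard facts from \cite[\S7]{Cox89}, I would follow the classical route through ideals prime to the conductor.

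\textbf{Part (2).} I would restrict to the subgroup $I(\calO,f)\subset I(\calO)$ generated by the proper $\calO$-ideals prime to $f$ (those $\fa$ with $\fa+f\calO=\calO$), and to $I_K(f)$ on the other side. The core is to show the two maps $\fa\mapsto\fa\calO_K$ and $\fb\mapsto\fb\cap\calO$ are mutually inverse multiplicative bijections $I(\calO,f)\cong I_K(f)$. The key local input is that for any prime $\fp\mid f$ one has $\calO_\fp=(\calO_K)_\fp$, so ideals prime to $f$ cannot detect the difference between $\calO$ and $\calO_K$, while away from $f$ the two orders already agree; multiplicativity then follows prime by prime. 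Next I would check that every ideal class of $\cl(\calO)$ is represented by an ideal prime to $f$ (one can always multiply a representative to move it off the finitely many primes dividing $f$), so that $\cl(\calO)=I(\calO,f)/P(\calO,f)$ with $P(\calO,f)$ the principal members. Transporting along the isomorphism, it remains to match the image of $P(\calO,f)$ with $P_{K,\Z}(f)$: a principal proper ideal prime to $f$ is $\alpha\calO$ with $\alpha\in\calO$ prime to $f$, and writing $\alpha=a+f\beta$ ($a\in\Z$, $\beta\in\calO_K$) gives $\alpha\equiv a\bmod f\calO_K$ with $(a,f)=1$, so $\alpha\calO_K\in P_{K,\Z}(f)$; conversely $\alpha\equiv a\bmod f\calO_K$ forces $\alpha\in\Z+f\calO_K=\calO$. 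This yields the isomorphism \eqref{eq:clgp_iso}.

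\textbf{Part (1).} Using (2) I would set up the exact sequence
\begin{equation*}
1\longrightarrow \calO_K^\times/\calO^\times\longrightarrow (\calO_K/f\calO_K)^\times/(\Z/f\Z)^\times\longrightarrow \cl(\calO)\longrightarrow \cl(\calO_K)\longrightarrow 1,
\end{equation*}
in which the right-hand map is $[\fa]\mapsto[\fa\calO_K]$ (surjective by (2)) and the middle map sends $\overline{\alpha}$ to the class of $\alpha\calO_K\cap\calO$. Exactness at the inner terms amounts to identifying $\ker(\cl(\calO)\to\cl(\calO_K))=P_K(f)/P_{K,\Z}(f)$ with the image of $(\calO_K/f\calO_K)^\times$ modulo the units $\calO_K^\times$. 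Taking orders gives $h_D=h_K\,\bigl[(\calO_K/f\calO_K)^\times:(\Z/f\Z)^\times\bigr]/[\calO_K^\times:\calO^\times]$, where I first record that $(\Z/f\Z)^\times\hookrightarrow(\calO_K/f\calO_K)^\times$ is injective since $f\calO_K\cap\Z=f\Z$. The residual quantity $|(\calO_K/f\calO_K)^\times|/\phi(f)$ is multiplicative in $f$ by the Chinese Remainder Theorem, so I reduce to $f=p^k$ and split into the cases $\leg{D_K}{p}=1,-1,0$ (split, inert, ramified). A direct count of units in $\calO_K/p^k\calO_K$ against those in $\Z/p^k\Z$ yields in each case the factor $p^k\bigl(1-\leg{D_K}{p}\tfrac1p\bigr)$; assembling these local factors reproduces exactly \eqref{eq:classno}.

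\textbf{Main obstacle.} The conceptual crux lies in Part (2): the bijection between ideals prime to the conductor and the precise matching of principal ideals with $P_{K,\Z}(f)$. Once these are secured, Part (1) is essentially bookkeeping in counting residue-ring units, whose only real subtlety is keeping the three splitting cases straight and checking the injectivity $f\calO_K\cap\Z=f\Z$. I would therefore invest most of the effort in making the local identification $\calO_\fp=(\calO_K)_\fp$ for $\fp\mid f$ precise and verifying its compatibility with products, from which everything else follows.
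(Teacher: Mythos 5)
The paper itself gives no argument for this proposition---it simply cites \cite[\S 7]{Cox89}---and your proposal is essentially a reconstruction of Cox's classical proof: part (2) via the norm-preserving multiplicative bijection between proper $\calO$-ideals prime to $f$ and $\calO_K$-ideals prime to $f$, with the principal subgroups matched against $P_{K,\Z}(f)$, and part (1) via the exact sequence $1\to \calO_K^\times/\calO^\times\to (\calO_K/f\calO_K)^\times/(\Z/f\Z)^\times\to \cl(\calO)\to \cl(\calO_K)\to 1$ followed by a CRT-reduced unit count giving the local factors $p^k\bigl(1-\leg{D_K}{p}\frac1p\bigr)$. The global architecture, the exact sequence, the identification $\ker(\cl(\calO)\to\cl(\calO_K))=P_K(f)/P_{K,\Z}(f)$, and the matching of principal ideals (including the observation that $\alpha\equiv a\bmod f\calO_K$ forces $\alpha\in\Z+f\calO_K=\calO$) are all correct and are exactly the route of Cox's Propositions 7.20, 7.22 and Theorem 7.24.

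There is, however, one concrete error, and it sits precisely in what you call the ``key local input'' (and repeat in your closing paragraph): you assert that $\calO_\fp=(\calO_K)_\fp$ \emph{for primes $\fp\mid f$}. This is false, and indeed backwards: the conductor $f$ is by definition the measure of where $\calO$ and $\calO_K$ differ, so for $p\mid f$ one has $\calO_p=\Z_p+f(\calO_K)_p\subsetneq(\calO_K)_p$ (e.g.\ $\calO=\Z[2i]\subset\Z[i]$ at $p=2$). The correct statement---which your parenthetical clause ``away from $f$ the two orders already agree'' does contain---is that $\calO_p=(\calO_K)_p$ precisely for $p\nmid f$; the bijection $\fa\mapsto\fa\calO_K$ works because an ideal prime to $f$ is determined by its localizations at primes \emph{not} dividing $f$, where the two orders coincide, and not because of any identification at primes dividing $f$. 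As written, the lemma you propose to ``invest most of the effort'' in proving is unprovable; with the quantifier corrected ($p\nmid f$ instead of $\fp\mid f$) the argument goes through as in Cox. A second, much smaller gloss: in matching principal ideals you should justify $(a,f)=1$, which follows since $\bar\alpha\equiv a\bmod f\calO_K$ as well, so $N(\alpha)\equiv a^2\bmod f$, and $\alpha\calO$ prime to $f$ means $(N(\alpha),f)=1$. With these repairs the proposal is a complete and faithful rendering of the proof the paper outsources to \cite{Cox89}.
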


\subsection{The ring class field and its maximal real subfield}
The isomorphism $\cl(\calO)\cong I_{K}(f)/P_{K,\Z}(f)$ in Proposition~\ref{prop:rclgp0}(2) means that 
 $\cl(\calO)$ is a generalized ideal class group, with $P_{K,\Z}(f)$ a congruence subgroup for the modulus 
$f\calO_K$. By the existence theorem of class field theory, this data determines a unique abelian
extension $L$ of $K$, which is called  the ring class field of $\calO$. Note that the
ring class field of $\calO_K$ is just the Hilbert class field.
The basic properties of the ring class field $L$ are that any primes that ramify in
$L$ divide $f$, and that the Artin reciprocity map induces an isomorphism
  \begin{equation}\label{eq:artin}
     \Gal(L/K)\cong \cl(\calO)
  \end{equation}
Let  $M=L^+:=L\cap \R$ be the maximal real subfield of $L$.

We give an idelic description of the class group $\cl(\calO)$ of $\calO$. Let $\BA^\times_K$ be the idele group of $K$.
For any $p$, set 
\[ 
\calO_{p,f} = \Z_p \otimes \calO  \quad \text{ and } \quad U_f = \prod_{p}\calO^\times_{p,f}\times \BC^\times \subset \BA^\times_K.\] If $v$ is a prime of $K$, let $K_v$ denote the completion of $K$ at $v$ and $\calO_{v}$ denote the ring of integers in $K_v$. We let $\fp_K$ denote a prime of $K$ lying above $p$.

\begin{proposition}\label{prop:rclgp1}
 Let $D$, $\calO$, $f$ and $K$ be given as above. 
\begin{enumerate}[1]
\item  If $p\nmid f$, then \begin{equation*}
\calO^\times_{p,f} \cong (\Z_p \otimes \calO_K)^\times  \cong \begin{cases}
 \calO^\times_{\fp_K}  & \text{ if } p \text{ does not split in } K,\\
 \Z^\times_p\times \Z^\times_p & \text{ if } p \text{ splits in }  K.
\end{cases}
\end{equation*}
\item If $p\mid f$, then
\begin{equation*}
\calO^\times_{p,f} \cong 
\begin{cases}
\{ x \in \calO^\times_{\fp_K}\mid x\equiv a \bmod f\calO_{\fp_K} \text{ for some } a \in \Z\} & \text{ if } p \text{ does not split in }K,\\
\{ (x,y)\in \Z^\times_p\times\Z^\times_p\mid x\equiv y \bmod f\Z_p  \} & \text{ if } p \text{ splits in }K.
\end{cases}
\end{equation*}

\item The Artin map gives a canonical isomorphism
\begin{equation*}
\BA^\times_K/K^\times U_f \cong  \Gal(L/K).
\end{equation*}
Thus $K^\times N_{L/K}\BA^\times_L=K^\times U_f$.
\end{enumerate}
\end{proposition}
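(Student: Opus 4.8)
The plan is to dispose of the two local statements (1) and (2) by a direct computation with the ring $\calO_{p,f}$, and then to deduce the global statement (3) from the ideal-theoretic description $\cl(\calO)\cong I_K(f)/P_{K,\Z}(f)$ of Proposition~\ref{prop:rclgp0}(2) through the standard dictionary between ideles and ideals.

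\emph{Parts (1) and (2).} The first move is the identity
\[
\calO_{p,f}=\Z_p\otimes_\Z\calO=\Z_p\otimes_\Z(\Z+f\calO_K)=\Z_p+f\,R,\qquad R:=\Z_p\otimes_\Z\calO_K,
\]
combined with $R=\prod_{\fp_K\mid p}\calO_{\fp_K}$, which is $\calO_{\fp_K}$ when $p$ is nonsplit in $K$ and $\Z_p\times\Z_p$ when $p$ splits (the two primes being unramified of degree one). If $p\nmid f$ then $f\in\Z_p^\times$, so $fR=R$ and $\calO_{p,f}=R$; taking unit groups gives (1). If $p\mid f$ then $f$ lies in the Jacobson radical of the semilocal ring $R$, so for $x=a+f\beta$ with $a\in\Z_p$ and $\beta\in R$ the image of $x$ in $R/\rad(R)$ equals that of $a$. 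Hence $x\in\calO_{p,f}$ is a unit of $R$ exactly when $a\in\Z_p^\times$, and in that case $x=a(1+f\gamma)$ with $\gamma\in R$ and $x^{-1}=a^{-1}\sum_{k\ge0}(-f\gamma)^k\in\Z_p+fR=\calO_{p,f}$. This proves $\calO_{p,f}^\times=\calO_{p,f}\cap R^\times$, which unwinds to the two congruence descriptions in (2); the replacement of ``$a\in\Z_p$'' by ``$a\in\Z$'' is harmless because $\Z$ is dense in $\Z_p$ and $R/fR$ is finite.

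\emph{Part (3).} I would realize the isomorphism $\BA^\times_K/K^\times U_f\cong\cl(\calO)$ through the homomorphism
\[
\theta\colon I_K(f)\longrightarrow \BA^\times_K/K^\times U_f,
\]
sending a prime $\fq\nmid f$ to the class of the idele with a uniformizer at $\fq$ and $1$ elsewhere. Surjectivity is the easy part: given any idele $\alpha$, weak approximation produces $a\in K^\times$ with $a^{-1}\alpha_p$ close to $1$, hence inside the open subgroup $\calO^\times_{p,f}$, at each of the finitely many $p\mid f$ (no archimedean condition is needed, since $\BC^\times\subseteq U_f$), so $[\alpha]$ is represented by a finite idele supported away from $f$ and therefore lies in the image of $\theta$. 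For the kernel, an ideal $\fa=\prod_{\fq\nmid f}\fq^{n_\fq}$ lies in $\ker\theta$ iff its associated idele equals $a\cdot u$ with $a\in K^\times$ and $u\in U_f$; comparing valuations forces $(a)=\fa$, and the condition $u_p=a^{-1}\in\calO^\times_{p,f}$ at $p\mid f$ translates, via (2), into $a\equiv c\bmod f\calO_K$ for some $c\in\Z$ prime to $f$, that is, $\fa\in P_{K,\Z}(f)$. Hence $\theta$ induces $I_K(f)/P_{K,\Z}(f)\xrightarrow{\sim}\BA^\times_K/K^\times U_f$, and composing with Proposition~\ref{prop:rclgp0}(2) and the Artin isomorphism \eqref{eq:artin} (the two Artin maps agreeing because the idelic map carries a prime idele to the corresponding Frobenius) yields $\BA^\times_K/K^\times U_f\cong\cl(\calO)\cong\Gal(L/K)$.

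Finally, the equality $K^\times N_{L/K}\BA^\times_L=K^\times U_f$ is a formal consequence of the main theorem of class field theory: the Artin kernel of the abelian extension $L/K$ is $K^\times N_{L/K}\BA^\times_L$, and the previous step identifies this same kernel with $K^\times U_f$. The hard part of the whole argument is the kernel computation for $\theta$: one must check that membership in $\prod_p\calO^\times_{p,f}$ at the primes dividing $f$ corresponds \emph{exactly} to the rationality congruence ``$\alpha\equiv a\bmod f\calO_K$ with $a\in\Z$'' defining $P_{K,\Z}(f)$, neither the coarser condition cutting out $\cl(\calO_K)$ nor the finer full ray-class condition. This is precisely the point at which the explicit local description from (2) is indispensable; everything else reduces to a local unit computation or to a formal application of reciprocity.
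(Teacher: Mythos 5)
Your proposal is correct and follows essentially the same route as the paper: parts (1) and (2) by the same local computation $\calO_{p,f}=\Z_p+f(\Z_p\otimes\calO_K)$ (your Jacobson-radical/geometric-series argument replaces the paper's explicit basis $\calO=\Z+\Z fw$, with no real difference), and part (3) by the standard idele--ideal dictionary plus weak approximation, concluding with the Artin reciprocity identification of $K^\times N_{L/K}\BA^\times_L$ with the Artin kernel. The only cosmetic deviation is directional: you map $I_K(f)$ into $\BA^\times_K/K^\times U_f$ and compute kernel and image, whereas the paper passes through the intermediate subgroup $\BA^\times_{K,\Z}(f)$ and maps ideles to ideals; both hinge on exactly the same two inputs, namely the approximation theorem for surjectivity and the local unit description from part (2) to match the congruence condition defining $P_{K,\Z}(f)$ at primes dividing $f$.
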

\begin{proof}
By the exact sequence 
\begin{equation*}
0\to \calO \to \calO_K \to \calO_K/\calO\to 0,
\end{equation*}
Then $\calO\otimes \Z_p=\calO_K \otimes \Z_p$ for $p\nmid f$. Thus
the assertion (1) follows from the canonical isomorphism:
\begin{equation*}
\calO_K \otimes \Z_p \cong \begin{cases}
\calO_{\fp_K} & \text{ if } p \text{ does not split}, \\
\Z_p\times \Z_p & \text{ if } p \text{ splits}.
\end{cases}
\end{equation*}

 For (2), take $w\in \calO_K$ such that $\calO_K=\Z + \Z w$. Then $\calO = \Z + \Z fw$ and $\calO \otimes \Z_p \cong \Z_p +\Z_p f w$. Since $p\mid f$, an element $x=u+vf w \in\calO \otimes \Z_p$ with $u,v\in \Z_p$ is in $(\calO\otimes \Z_p)^\times$ if and only if $p\nmid u$. The latter is also equivalent to that $ x\equiv a\bmod f(\calO \otimes \Z_p)$ for some $a\in \Z$ and $(a,f)=1$. This proves (2). 
 
 For (3), we let 
\[
\BA^\times_{K,\Z}(f) = \{ (x_v)_v \in \BA^\times_K\mid x_v \equiv a \bmod f\calO_v \text{ with } a\in \Z \text{ and }(a,f)=1\}.
\]
Consider the canonical map $\pi: \BA^\times_K \to I_K$, where $I_K$ is the group of fractional ideals of $K$. This map induces an isomorphism 
\begin{equation*}
\BA^\times_{K,\Z}(f)/U_f \to I_K(f).
\end{equation*}
Note that $\pi(K^\times \cap \BA^\times_{K,\Z}(f))$ is equal to $P_{K,\Z}(f)$. Thus we obtain an isomorphism 
\begin{equation*}
\BA^\times_{K,\Z}(f)/(K^\times \cap \BA^\times_{K,\Z}(f)) U_f \cong I_{K,\Z}(f)/P_{K,\Z}(f).
\end{equation*}
The inclusion $\BA^\times_{K,\Z}(f) \to \BA^\times_K$ induces an isomorphism 
\begin{equation}
\BA^\times_{K,\Z}(f)/(K^\times \cap \BA^\times_{K,\Z}(f)) U_f \cong \BA^\times_K/K^\times U_f.
\end{equation}
The injection is a direct verification and the surjection is by the approximation theorem. By \eqref{eq:artin} and Proposition~\ref{prop:rclgp0}(2), the proof of Proposition~\ref{prop:rclgp1} is complete. \end{proof}

If $p\mid f$, we let $f^{(p)}$ be the prime-to-$p$ part of $f$ and $\calO^{(p)}=\Z+f^{(p)} \calO_K$ be the order of $K$ of conductor $f^{(p)}$. Then the corresponding discriminant $D^{(p)}=(f^{(p)})^2 D_K$. Let $L^{(p)}$ be the corresponding ring class field of $\calO^{(p)}$ and  $M^{(p)}:=L^{(p)}\cap \R$ be its maximal real subfield.

\begin{proposition}\label{prop:totrami}  Suppose $f=p^k f^{(p)}$ with $p\nmid f^{(p)}$ and $k\geq 1$. Then
	\begin{enumerate}[1]
		\item  $L$ is an extension of $L^{(p)}$ of degree
		\begin{equation} \label{eq:classno2}
		h^{(p)}_D=h_D/h_{D^{(p)}}=\frac{p^k}{[(\calO^{(p)})^\times:\calO^{\times}]}\left(1-\Bigl(\frac{D_K}{p}\Bigr)\frac{1}{p}
			\right). 
		\end{equation} 
		Furthermore,  $L/L^{(p)}$ is totally ramified at every prime lying above $p$.
	 \item $M$ is an extension of $M^{(p)}$ of degree $h^{(p)}_D$ and 
	 is totally ramified at every prime lying above $p$.
	\end{enumerate}
\end{proposition}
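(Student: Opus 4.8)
The plan is to prove (1) and (2) together, getting the degree from the Artin isomorphism and the class number formula \eqref{eq:classno}, the ramification in (1) from the idelic description in Proposition~\ref{prop:rclgp1}(3), and the ramification in (2) from the generalized dihedral structure of $\Gal(L/\Q)$.

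First I would record the degree. Since $f^{(p)}\mid f$ we have $\calO\subset\calO^{(p)}$, hence $L^{(p)}\subseteq L$, and under the Artin isomorphism \eqref{eq:artin} the restriction map realizes $\Gal(L/L^{(p)})$ as the kernel of the natural surjection $\cl(\calO)\twoheadrightarrow\cl(\calO^{(p)})$ coming from \eqref{eq:clgp_iso}; thus $[L:L^{(p)}]=h_D/h_{D^{(p)}}$. Feeding \eqref{eq:classno} into this ratio, the factor $h_K$ cancels, $f/f^{(p)}=p^k$, the unit indices combine (using $\calO^\times\subseteq(\calO^{(p)})^\times\subseteq\calO_K^\times$) to $1/[(\calO^{(p)})^\times:\calO^\times]$, and the two Euler products differ exactly by the factor at $p$, namely $1-\bigl(\tfrac{D_K}{p}\bigr)\tfrac1p$. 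This yields the formula \eqref{eq:classno2}.

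For the total ramification in (1) I would use Proposition~\ref{prop:rclgp1}(3): $\Gal(L/K)\cong\BA^\times_K/K^\times U_f$, under which the inertia subgroup at $p$ is the image of the local units $(\calO_K\otimes\Z_p)^\times=\prod_{v\mid p}\calO_v^\times$. The key observation is that $U_f$ and its analogue $U_{f^{(p)}}$ for $\calO^{(p)}$ agree at every place away from $p$, while the $p$-component of $U_{f^{(p)}}$ is the \emph{full} local unit group $(\calO_K\otimes\Z_p)^\times$ (as $p\nmid f^{(p)}$, by Proposition~\ref{prop:rclgp1}(1)). Hence $K^\times U_{f^{(p)}}=K^\times\,(\calO_K\otimes\Z_p)^\times\,U_f$, so the image of the local units is precisely $K^\times U_{f^{(p)}}/K^\times U_f=\Gal(L/L^{(p)})$. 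Therefore the inertia group of $L/K$ at $p$ is all of $\Gal(L/L^{(p)})$; since $L^{(p)}/K$ is unramified at $p$, this forces $e_p(L/L^{(p)})=|\Gal(L/L^{(p)})|=[L:L^{(p)}]$, i.e. total ramification (and simultaneously recovers the degree). When $p$ splits in $K$ one checks that the image of a single local factor $\calO_{\fp_K}^\times$ already surjects, so each prime above $p$ is totally ramified.

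For (2) I would pass to $G:=\Gal(L/\Q)=A\rtimes\langle c\rangle$, the generalized dihedral group with $A=\Gal(L/K)\cong\cl(\calO)$ and $c$ complex conjugation acting by inversion. Writing $B=\Gal(L/L^{(p)})\le A$, one has $M=L^{\langle c\rangle}$, $L^{(p)}=L^{B}$, and $M^{(p)}=M\cap L^{(p)}=L^{B\rtimes\langle c\rangle}$, so $[M:M^{(p)}]=|B|=h^{(p)}_D$. Fixing a prime $\fP$ of $L$ above $p$ with inertia $I=I_\fP(L/\Q)$, part (1) gives $I\cap A=I_\fP(L/K)=B$, while $I/B$ injects into $\Gal(K/\Q)$, so $[I:B]$ is $2$ if $p$ ramifies in $K$ and $1$ otherwise. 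Using the index formula $e(\fP\cap L^H/p)=[I:I\cap H]$ one gets $e(\fP\cap M/\fP\cap M^{(p)})=|I\cap(B\rtimes\langle c\rangle)|/|I\cap\langle c\rangle|$, and a short case analysis ($p$ unramified in $K$; $p$ ramified with the reflection in $I$ lying inside or outside $B\rtimes\langle c\rangle$) shows this ratio is always $|B|$. Since one prime of $M$ already attains ramification index $[M:M^{(p)}]$ over $\fP\cap M^{(p)}$, it is the unique such prime, giving total ramification at every prime above $p$.

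The hard part will be (2): unlike $L/L^{(p)}$, the extension $M/M^{(p)}$ is in general \emph{not} Galois, since $\langle c\rangle$ is normal in $B\rtimes\langle c\rangle$ only when $B$ is $2$-torsion, so one cannot simply identify the inertia group with the full Galois group. The careful bookkeeping of $I$ inside the nonabelian $G$ — deciding whether $I$ contains a reflection and whether a chosen lift of $c$ lands in $B$ — is the crux, and is handled cleanly by the index formula $e=[I:I\cap H]$ for the non-normal $H=\langle c\rangle$. Secondary care is needed for the split case in (1) and for the special discriminants $D_K\in\{-3,-4\}$, where the unit indices $[(\calO^{(p)})^\times:\calO^\times]$ are nontrivial.
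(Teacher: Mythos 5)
Your proposal is correct, and it splits cleanly into a part that matches the paper and a part that does not. For the degree formula and for part (1) you follow essentially the paper's route: the paper likewise gets $L\supseteq L^{(p)}$ from $U_f\subseteq U_{f^{(p)}}$, notes that $\Gal(L/L^{(p)})$ is generated by the inertia groups of the primes of $K$ above $p$ (your observation that $U_{f^{(p)}}=(\calO_K\otimes\Z_p)^\times\, U_f$), and in the split case proves $I_{\fp_K}(L/K)=I_{\fp'_K}(L/K)$ by exactly the computation you defer with ``one checks'': an idele $a\in\calO^\times_{\fp_K}$ is rewritten as an element of $\calO^\times_{\fp'_K}$ times the diagonal idele with component $a$ at both $\fp_K$ and $\fp'_K$, which lies in $U_f$ by Proposition~\ref{prop:rclgp1}(2); that is the one substantive verification in (1) (it is \eqref{eq:inert0} in the paper), so write it out. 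Where you genuinely diverge is part (2). The paper never enters $\Gal(L/\Q)$ there: it argues field-theoretically that if a prime $\fq$ of $M^{(p)}$ above $p$ ramifies in the quadratic extension $L^{(p)}/M^{(p)}$, then by (1) it is totally ramified in $L/M^{(p)}$ and hence in the intermediate extension $M/M^{(p)}$; while if $\fq$ is unramified in $L^{(p)}/M^{(p)}$, then since $L=L^{(p)}M$ every prime of $M$ above $\fq$ is unramified in $L/M$, and comparing ramification indices in the tower $M^{(p)}\subset M\subset L$ again forces $e=[M:M^{(p)}]$. Your argument instead computes inertia inside the generalized dihedral group via $e(\fP\cap L^{H}/p)=[I:I\cap H]$ applied to $H=\langle c\rangle$ and $H=B\rtimes\langle c\rangle$, with a case split on whether $p$ ramifies in $K$ and on whether the reflection of $I$ lies in $B\rtimes\langle c\rangle$; I have checked the resulting ratio $|I\cap(B\rtimes\langle c\rangle)|/|I\cap\langle c\rangle|$ is $|B|$ in all three cases, as you claim. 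Both proofs are sound. Your version correctly confronts the non-Galois-ness of $M/M^{(p)}$ head-on (and your remark that $\langle c\rangle$ is normal in $B\rtimes\langle c\rangle$ only when $B$ is $2$-torsion is accurate), and it buys explicit knowledge of the inertia groups, which is in the spirit of what the paper later does in Theorem~\ref{them:prime factorization}; the paper's compositum trick is shorter and needs no bookkeeping. One point to make explicit in your write-up: since $B$ is normal in $G$ (it is a subgroup of the abelian $A$ inverted by $c$), every inertia group above $p$ has the form $B$ or $B\sqcup B\sigma' c$, so your two-case analysis genuinely covers all primes of $M$ above $p$, and only then does the fundamental identity $\sum_i e_if_i=[M:M^{(p)}]$ yield the uniqueness of the prime above each $\fq$ that your final sentence asserts.
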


\begin{proof}
(1) By Proposition~\ref{prop:rclgp1}, we have $U_f\subset U_{f^{(p)}}$. According to class field theory, we have $L\supset L^{(p)}$ and $\Gal(L/L^{(p)})$ is generated by the inertia groups of the primes of $K$ lying above $p$. Thus, if $p$ does not split in $K$, then $L/L^{(p)}$ is totally ramified at each prime lying above $p$. Now assume that $p$ splits in $K$, say $p\calO_K = \fp_K \fp'_K$. Then we claim
$I_{\fp_K}(L/K) =I_{\fp'_K}(L/K)$. 
By class field theory, it suffices to show that 
\begin{equation}\label{eq:inert0}
\calO^\times_{\fp_K} (K^\times U_f) = \calO^\times_{\fp'_K} (K^\times U_f).
\end{equation}
Given $a=(1,\cdots, 1,\underset{\fp_K}{a},1\cdots, 1)\in \calO^\times_{\fp_K}\subset \BA^\times_K$, we have by Proposition~\ref{prop:rclgp1}
\begin{equation}
a = (1,\cdots, 1, \underset{\fp'_K}{a^{-1}},1\cdots, 1) (1,\cdots, 1,\underset{\fp_K}{a},\underset{\fp'_K}{a}\cdots, 1)  \in \calO^\times_{\fp'_K} U_f.
\end{equation}
From this, we conclude that \eqref{eq:inert0} holds whence the claim is proved. Therefore when $p$ splits, $L/L^{(p)}$ is also totally ramified at each prime lying above $p$. The formula \eqref{eq:classno2} follows from Proposition~\ref{prop:rclgp0}(1).

(2)  Since $L^{(p)}$ is not contained in $M=L\cap \R$, we have $M\supset M^{(p)}$ and $[M:M^{(p)}]=h_D^{(p)}$. If a prime $\fq$ of $M^{(p)}$ above $p$ is ramified in $L^{(p)}/M^{(p)}$, then $\fq$ must be totally ramified in $L/M^{(p)}$ whence totally ramified in $M/M^{(p)}$; if $\fp'$ is unramified in $L^{(p)}/M^{(p)}$, noting that $L=L^{(p)}M$, it follows that every prime of $M$ lying above $\fq$ is unramified in $L$. Thus by comparing the ramification index in the extension $L/M^{(p)}$, we conclude that $M/M^{(p)}$ is totally ramified at every prime above $p$. \end{proof}

%Hereafter, if $p$ does not split in $K$, we let  $\fp_K$ denote the unique prime of $K$ above $p$.

%%%%%%%%%%%%%%%%%%%%%%%%%%%%%%%%%%%%%%%%%%%%%%%%%%%%%%%%%%%%%%%%%%%%%%%%%%%%%%%%%%%%%%%%%%%%%%%%%%%%%%%%%%%%%%%%%%%%%%%%%%%

\subsection{The genus field and its maximal real subfield}\label{sec:genus}

Fix $D$ and $\calO$. We denote by $F$  the genus field  of $\calO$ and $F^+=F\cap \R$ its maximal real subfield, which means that $F$ is the intermediate field of $L/K$ fixed by $\cl(\calO)^2=\Gal(L/K)^2$, i.e.  
\begin{equation}
	F=L^{\cl(\calO)^2},\quad \Gal(L/F)=\Gal(L/K)^2 = \cl(\calO)^2.
\end{equation} 
Define $\mu\geq 1$ to be the integer such that 
\begin{equation}\label{eq:mu}
	\mu=\mu_D=\log_2[F:\Q]= \rk_2 \cl(\calO)+1.
\end{equation}
Genus theory tells us that  $\Gal(F/\Q)\cong (\Z/2\Z)^\mu$ and $\Gal(F/K)\cong \Gal(F^+/\Q)\cong (\Z/2\Z)^{\mu-1}$.

Suppose $\{p_1,\cdots, p_r\}$ is the set of odd prime factors of $D$ such that the first $m$ are $\equiv 1\bmod 4$ and the rest $\equiv 3\bmod 4$. 
%For $n\neq 0$, let $\rad(n)=\prod_{p\mid n} p$ be the squarefree part of $n$. 
Set
 \begin{align} &\vp_i=p_i\qquad (\text{if}\ p_i\equiv 1\bmod{4});\\
 	&\vp_i=\begin{cases} -D/p_i, & \text{if}\ D\ \text{or}\ D/4\ \text{or}\ D/8\equiv 1\bmod{4} \\ 2p_i, &\text{if}\ D/8\equiv 3\bmod{4}\\ p_i, &\text{if}\ D/4\equiv 0, 3\bmod{4} 
 	\end{cases}\qquad (\text{if}  \ p_i\equiv 3\bmod{4}); \\
  &\vp_0=\begin{cases}
  	2, &\text{if}\ D/8\equiv 0, 1\bmod{4};\\ 1, & \text{if otherwise}.
  \end{cases}
 \end{align}

\begin{proposition}\label{prop:genus_field1} The fields $F^+$ and $F$ are given as follows:
\[ F^+=\Q(\sqrt{\vp_0},\sqrt{\vp_1},\cdots, \sqrt{\vp_r} ) \quad \text{ and } \quad F = F^+ K=F^+ (\sqrt{D_K}).  \]
Consequently, 
\begin{enumerate}[1]
	\item $\mu=1$, i.e. $F^+=\Q$ and $F=K$ if and only if $D\in \{-4, -8, -16,-p^{2k+1}, -4p^{2k+1}\mid p\equiv 3\bmod{4}, k\in \Z_{\geq 0}\}$;
	\item  For any prime $p$,  $f_p(F^+/\Q)\leq f_p(F/\Q)\leq 2$.
\end{enumerate}
\end{proposition}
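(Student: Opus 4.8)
The plan is to pin down $F^+$ and then read off $F=F^+K$. By the genus theory recalled above, $\Gal(F/\Q)\cong(\Z/2\Z)^{\mu}$ and $\Gal(F^+/\Q)\cong(\Z/2\Z)^{\mu-1}$, so $F/\Q$ is multiquadratic and contains $K=\Q(\sqrt{D_K})$; as $F^+$ is real and $K$ is imaginary, $F^+\cap K=\Q$ and $F=F^+K$. Everything therefore reduces to identifying the quadratic subfields of $F$, i.e.\ to proving $F^+=\widetilde F$, where $\widetilde F:=\Q(\sqrt{\vp_0},\sqrt{\vp_1},\dots,\sqrt{\vp_r})$ (note every $\vp_i>0$, so $\widetilde F$ is indeed real).

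The main tool is a membership criterion. For a squarefree integer $d$ with $\Q(\sqrt d)\neq\Q,K$ I claim $\Q(\sqrt d)\subseteq F$ if and only if the biquadratic field $\Q(\sqrt d)K=\Q(\sqrt d,\sqrt{D_K})$ lies in the ring class field $L$. Indeed $\Q(\sqrt d)\subseteq\Q(\sqrt d)K$, and if $\Q(\sqrt d)K\subseteq L$ then $\Gal(L/\Q(\sqrt d)K)$ is an index-$2$ subgroup of the abelian group $\cl(\calO)\cong\Gal(L/K)$ and hence automatically contains $\cl(\calO)^2=\Gal(L/F)$, forcing $\Q(\sqrt d)K\subseteq F$. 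By Proposition~\ref{prop:rclgp1}(3), $\Q(\sqrt d)K\subseteq L$ is in turn equivalent to the idele-class character of $K$ cutting out $\Q(\sqrt d)K/K$ being trivial on $U_f=\prod_p\calO^\times_{p,f}\times\BC^\times$. The archimedean factor $\BC^\times$ is connected and so imposes nothing; at each finite $p$ this is a local condition on $\calO^\times_{p,f}$, which by Proposition~\ref{prop:rclgp1}(1) says ``unramified above $p$'' when $p\nmid f$, and by Proposition~\ref{prop:rclgp1}(2) is the stated congruence condition when $p\mid f$.

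With this criterion the task becomes to determine, by a local computation at $2$ and at the odd $p_i$, exactly which squarefree $d$ satisfy ``$\Q(\sqrt d)K/K$ is unramified outside $f$ with the prescribed congruences at $p\mid f$,'' and to show that this set of $d$ is precisely the subgroup of $\Q^\times/(\Q^\times)^2$ generated by $D_K$ and the $\vp_i$. For an odd $p_i\equiv1\bmod4$ one has $\disc\Q(\sqrt{p_i})=p_i$, ramified only at $p_i$ where $K$ is already ramified, so tame cancellation gives $\Q(\sqrt{p_i})K/K$ unramified away from $f$ and $\vp_i=p_i$ works. For $p_i\equiv3\bmod4$ the field $\Q(\sqrt{p_i})$ is ramified at $2$ as well, so one must correct by a suitable $2$-unit and by the prime-to-$p_i$ part of $D$ so as to cancel the excess $2$-ramification against that of $K$ while keeping the radicand positive; this is exactly what the branches defining $\vp_i$ (and $\vp_0$ for the prime $2$) accomplish, the branch being governed by the residues of $D,\,D/4,\,D/8$ modulo $4$ and by whether $2\mid f$. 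Carrying out and closing this $2$-adic/$\bmod 8$ bookkeeping in both directions is the technical heart of the proof and the step I expect to be the main obstacle. Once done, it yields $F=\widetilde F K$, hence $F^+=F\cap\R=\widetilde F$ and $F=F^+K=F^+(\sqrt{D_K})$.

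For the consequences: statement (1) is the explicit solution of $F^+=\Q$, i.e.\ of the requirement that every $\vp_i$ be a perfect square. Since $\vp_i=p_i$ and $\vp_i=2p_i$ are never squares, no prime $\equiv1\bmod4$ may divide $D$ and each odd $p_i\equiv3\bmod4$ must fall in the branch $\vp_i=-D/p_i$ with $-D/p_i$ a square; a short check (two distinct such $p_i$ are impossible, and $\vp_0=1$ must also hold) leaves exactly $D\in\{-4,-8,-16\}$ together with $D=-p^{2k+1}$ and $D=-4p^{2k+1}$ for $p\equiv3\bmod4$, $k\geq0$. Statement (2) is immediate from the multiquadratic structure: in the elementary abelian $2$-extension $F/\Q$ every decomposition group is a subgroup of $(\Z/2\Z)^{\mu}$, so a Frobenius element has order at most $2$ and $f_p(F/\Q)\leq2$; and $f_p(F^+/\Q)\mid f_p(F/\Q)$ since $F^+\subseteq F$, giving $f_p(F^+/\Q)\leq f_p(F/\Q)\leq2$.
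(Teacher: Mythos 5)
Your reduction steps are sound: the observation that any quadratic subextension of $L/K$ lies in $F$ because an index-$2$ subgroup of the abelian group $\Gal(L/K)\cong\cl(\calO)$ automatically contains $\cl(\calO)^2$, the idelic translation via Proposition~\ref{prop:rclgp1}(3), and your derivations of the consequences (1) and (2) from the displayed description of $F^+$ (in particular the bound $f_p(F/\Q)\le 2$ from the elementary abelian Galois group) are all correct. But the proof has a genuine gap, and it sits exactly where you flag it: the identification of the admissible squarefree $d$ with the subgroup of $\Q^\times/(\Q^\times)^2$ generated by $D_K$ and the $\vp_i$ is asserted, not proved. Two things are missing. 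First, the local verification at $2$ for the branches with $p_i\equiv 3\bmod 4$ and for $\vp_0$: one must check, case by case according to the residues of $D$, $D/4$, $D/8$ modulo $4$, that $\Q(\sqrt{\vp_i},\sqrt{D_K})/K$ is unramified outside $f$ and satisfies the congruence condition at primes dividing $f$; you explicitly defer this as ``the main obstacle.'' Second, and not addressed at all, is completeness. Your argument as written yields at best the inclusion $\Q(\sqrt{\vp_0},\dots,\sqrt{\vp_r})\subseteq F^+$, and equality cannot be extracted from containment plus an independence count, because the $\vp_i$ satisfy genuine multiplicative relations modulo squares: e.g.\ for $D=-p_1p_2p_3$ with all $p_i\equiv 3\bmod 4$ one has $\vp_i=-D/p_i$ and $\vp_1\vp_2\vp_3=(p_1p_2p_3)^2$, so $[\Q(\sqrt{\vp_1},\sqrt{\vp_2},\sqrt{\vp_3}):\Q]=4$ rather than $8$. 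Closing the argument therefore requires either the converse local analysis for an arbitrary squarefree $d$, or a computation of $\rk_2\cl(\calO)$ for a general (non-maximal) order showing the span of $D_K$ and the $\vp_i$ has order exactly $2^{\mu}$ --- either of which is comparable in substance to the statement being proved.

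For comparison, the paper does not carry out this computation either: its entire proof is the citation of Cohn, Theorem 2.3.23. Your plan amounts to reproving that theorem by idelic class field theory, which is a legitimate and self-contained route that meshes well with Proposition~\ref{prop:rclgp1} (proved in the paper in precisely this language); but as submitted it is an outline whose technical heart --- the mod-$8$ bookkeeping in both directions --- is left open, so it does not yet constitute a proof of the displayed identity, even though everything you build on top of that identity is correct.
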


\begin{proof}
See \cite[Theorem 2.3.23]{Cohn94}.
\end{proof}

We now assume a rational prime $p$ always splits completely in $\Q$. Applying Proposition~\ref{prop:genus_field1}, we obtain the following results, which will be used in Theorem~\ref{them:prime factorization}, after some computation:
\begin{lemma}\label{lem:p0} 
Suppose that $p\nmid f$ and $p$ does not split in $K$. Let  $\fp_K$ denote the unique prime of $K$ above $p$. Then 
\begin{enumerate}[1]	
	\item $\fp_K \cap \calO$ is principal if and only if $p$ is either inert in $K$ or $p$ is ramified in $K$ and $D\in \{-p,-2p, -4p\}$. If $p$ is inert in $K$, then $p$ splits completely in $F^+$ if and only if $\leg{\vp_1}{p} =  \cdots \leg{\vp_r}{p} = 1$ if $p>2$, or $\{p_i\bmod{8}\}=\{1,3\}$ or $\{1,7\}$ if $p=2$.
	
	\item $\fp_K \cap \calO$ is not principal  if and only if $p\mid D_K$ and $D\notin\{ -p, -2p,-4p\}$.
\end{enumerate}  
	Assume furthermore  $p\mid D_K$ and $D\notin\{ -p, -2p,-4p\}$, equivalently  $\fp_K \cap \calO$ is not principal. 	\begin{enumerate}[resume*]

		\item $p$ is unramified in $F^+$ if and only if $4^2\nmid D$, $p_i\equiv 1\bmod{4}$ for all $p_i\mid D$ but $p_i\nmid 2p$ and  $p\not\equiv 1\bmod{4}$. In this case, $p$ splits completely in $F^+$  if and only if (i) $\leg{p_i}{p}=1$ if  either $p\equiv 7\bmod{8}$ or $p\equiv 3\bmod{8}$ and $D$ or $D/4\equiv 1 \bmod 4$, or (ii) $p_i\equiv 1\bmod{8}$ if $p=2$.
	
		\item $p$ is ramified in $F^+$ if and only if  either (i)  $p\equiv 1\bmod 4$ or (ii) $D$ is not of the form $-2^a p_1\cdots p_m p$ with $a=0,2,3$,  $p_i\equiv 1\bmod 4$ and $p\equiv 3\bmod{4}$ or (iii) $D$ has a prime factor $\equiv 3\bmod{4}$ and $p=2$.  In this case,  $f_p(F/F^{+}) = 2$ if and only if (i) $\leg{\tp_i}{p}={1}$ for all $0\leq i\leq r$ where $\tp_i$ is the prime-to-$p$ part of $\vp_i$ and $\leg{D_K/{p}}{p}= -1$ if $p>2$, or (ii) all odd prime factors of $D$ are $1$ or $3\bmod{8}$ if $p=2$.	
		\end{enumerate}
\end{lemma}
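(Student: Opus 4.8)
The plan is to split the statement into two logically independent ingredients and treat them separately. The first is a principality criterion for $\fp_K\cap\calO$ (the opening sentences of (1) and (2)), which is pure class field theory. The second is a splitting-and-ramification analysis of $p$ in the multiquadratic field $F^+=\Q(\sqrt{\vp_0},\dots,\sqrt{\vp_r})$ furnished by Proposition~\ref{prop:genus_field1} (all the remaining assertions), which reduces every claim to Kronecker symbols $\leg{\vp_i}{p}$ and to the $p$-divisibilities of the discriminants of the quadratic fields $\Q(\sqrt{\vp_i})$. Throughout I would run the odd $p$ and the $p=2$ arguments in parallel, the latter using the convention for $\leg{\cdot}{2}$ fixed in the Conventions.

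For the principality criterion, note that since $p\nmid f$ the ideal $\fp_K\cap\calO$ is a proper $\calO$-ideal of norm $p$, and under the Artin isomorphism $\cl(\calO)\cong\Gal(L/K)$ of \eqref{eq:artin} its class is the Frobenius of $\fp_K$; hence it is principal if and only if $\fp_K$ splits completely in $L$. If $p$ is inert then $\fp_K=p\calO_K$ and $\fp_K\cap\calO=p\calO$ is visibly principal. If $p$ is ramified then $(\fp_K\cap\calO)^2=p\calO$, so the class lies in $\cl(\calO)[2]$ and principality is equivalent to the principal form of discriminant $D$ representing $p$; writing out $x^2+\frac{|D|}{4}y^2=p$ (respectively the reduced form attached to $D\equiv 1\bmod 4$) and using $p\mid D$ forces $|D|\le 4p$, leaving the finitely many solutions collected in $D\in\{-p,-2p,-4p\}$ (intersected with the valid discriminants for which $p$ is ramified). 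This simultaneously proves the equivalence in (1) and its negation in (2), the hypothesis $p\mid D_K$ being exactly the condition that $p$ ramify in $K$.

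For the splitting assertions I would invoke $F^+=\Q(\sqrt{\vp_0},\dots,\sqrt{\vp_r})$: a prime unramified in a multiquadratic field splits completely iff every generator is a square in the residue field, so $p$ splits completely in $F^+$ iff $\leg{\vp_i}{p}=1$ for all $i$ (with the $\bmod 8$ translation when $p=2$). In the inert case of (1) one has $p\nmid D$, so no $\vp_i$ is divisible by $p$ and every symbol is defined; the apparent absence of $\vp_0$ should be explained by the single multiplicative relation among the generators coming from $\prod_i\vp_i\in |D_K|\cdot(\Q^\times)^2$ (reflecting $\sqrt{|D_K|}\in F^+$ and $F=F^+(\sqrt{D_K})$), which together with $\leg{D_K}{p}=-1$ eliminates one of the conditions. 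For (3) I would compute the ramification of $p$ in $F^+$ factor by factor: $p$ is unramified iff $p\nmid\disc(\Q(\sqrt{\vp_i}))$ for every $i$, and unwinding the definitions of the $\vp_i$ and $\vp_0$ turns this into the stated congruences on $D$, with $4^2\nmid D$ controlling the $2$-part and the conditions $p\equiv1\bmod 4$, $p_i\equiv1\bmod 4$ controlling the odd discriminants; the split-completely refinement is then again a conjunction of Kronecker-symbol conditions.

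The subtlest point, and where I expect the genuine work to lie, is the inertia-degree computation $f_p(F/F^+)=2$ in (4) under the standing hypothesis $p\mid D_K$, where $p$ ramifies both in $K$ and in $F^+$. Here $F=F^+(\sqrt{D_K})$, and for a prime $\fp$ of $F^+$ above $p$ I would separate the $p$-part of $D_K$ (whose ramification is already present in $F^+$, as guaranteed by the congruence conditions of (4)) from the prime-to-$p$ part $D_K/p$, so that $\fp$ is inert in $F/F^+$ exactly when its residue field is $\F_p$ and $\sqrt{D_K}$ generates the unramified quadratic extension of that residue field; this is governed by $\leg{D_K/p}{p}=-1$ together with $\leg{\tp_i}{p}=1$ for all $i$ (the latter forcing $f_p(F^+/\Q)=1$), with the $p=2$ case replaced by the congruence that all odd prime factors of $D$ be $1$ or $3\bmod 8$. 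The principal obstacles are the careful bookkeeping of precisely which $\vp_j$ is divisible by $p$ — which is what forces the passage from $\vp_i$ to its prime-to-$p$ part $\tp_i$ — and the $2$-adic analysis, where ramification in $\Q(\sqrt{\vp_i})$ and the value of $\leg{\cdot}{2}$ must be read off from $D\bmod 8$ and from the residues $p_i\bmod 8$.
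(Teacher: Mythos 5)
Your overall strategy is the one the paper intends: the paper offers no written proof beyond ``applying Proposition~\ref{prop:genus_field1} \dots after some computation,'' and your two ingredients --- principality of $\fp_K\cap\calO$ via the Artin isomorphism together with representability of $p$ by the principal form of discriminant $D$, and the reduction of all splitting/ramification claims in $F^+=\Q(\sqrt{\vp_0},\dots,\sqrt{\vp_r})$ to Kronecker symbols and discriminants of the quadratic subfields --- are exactly that computation. Your outline of (4) (forcing $f_p(F^+/\Q)=1$ via $\leg{\tp_i}{p}=1$ for all $i$, then detecting the residue extension coming from $\sqrt{D_K}$ in $F=F^+(\sqrt{D_K})$ via $\leg{D_K/p}{p}=-1$) is the right shape. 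One small gap in the principality step: $p\mid D$ and $|D|\le 4p$ also admits $D=-3p$, a valid discriminant with $p$ ramified when $p\equiv 1\bmod 4$, and your parenthetical about ``valid discriminants'' does not exclude it; you must still verify that $x^2+xy+\frac{1+3p}{4}y^2=p$ is insoluble (clearing denominators forces $(2x+y)^2=p$), e.g.\ for $D=-15$ the prime $5$ is represented only by the non-principal form $2x^2+xy+2y^2$. Only after that check does the list collapse to $\{-p,-2p,-4p\}$.

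The genuine error is your justification for the absence of $\vp_0$ in part (1). The claimed relation $\prod_i\vp_i\in|D_K|\cdot(\Q^\times)^2$, equivalently $\sqrt{|D_K|}\in F^+$, is false in general: for $D=-15$ one has $\vp_1=\vp_2=5$, so the product is $\equiv 1$, not $15$, modulo squares, and indeed $F^+=\Q(\sqrt5)\not\ni\sqrt{15}$; similarly for $D=-24$ (where $\vp_0\vp_1=16\equiv 1$ but $|D_K|=24\equiv 6$), and for $D=-q$ with $q\equiv 3\bmod 4$, where $F^+=\Q$ while $|D_K|=q$. Worse, no argument can eliminate the $\vp_0$-condition: take $D=-192$, so $\vp_0=2$, $\vp_1=3$, $F^+=\Q(\sqrt2,\sqrt3)$, $K=\Q(\sqrt{-3})$, $f=8$; then $p=11$ is inert in $K$ and satisfies $\leg{3}{11}=1$, i.e.\ the displayed criterion of (1), yet $\leg{2}{11}=-1$, so $11$ does not split completely in $F^+$. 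The correct criterion must read $\leg{\vp_i}{p}=1$ for all $0\le i\le r$ (as the paper itself writes in part (4)); the omission of $\vp_0$ in the printed statement of (1) is an oversight that matters precisely when $\vp_0=2$, and your proof should simply prove the criterion with the index $0$ included rather than manufacture a multiplicative relation to remove it. (This repair is harmless downstream: Theorem~\ref{them:prime factorization} invokes only the condition ``$p$ splits completely in $F^+$'' itself.)
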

\begin{remark} The condition $D\in \{-p,-2p, -4p\}$ means  $D\in \{-p,-4p\}$ if $p\equiv 3\bmod{4}$,  $D\in \{-4,-8\}$ if $p=2$  and $D=-4p$ if $p\equiv 1\bmod{4}$, hence  $F^+=\Q$ if $p\not\equiv 1\bmod{4}$.
\end{remark}

\subsection{Elliptic curves with complex multiplication}

For a lattice  $\Lambda \subseteq \BC$, let $E_{\Lambda}$ be the elliptic curve over $\BC$ such that $E_\Lambda(\BC) \cong \BC/\Lambda$. Then $E_{\Lambda}\cong E_{\Lambda'}$ (i.e. $j(E_{\Lambda})=j(E_{\Lambda'})$) if and only if $\Lambda=\lambda\Lambda'$ for some $\lambda \in \BC^{\times}$ (i.e. $\Lambda$ and $\Lambda'$ are homothetic).

Assume $D<0$ and  $D\equiv 0, 1\bmod{4}$. Let $\calO$ be the corresponding order.  Then the endomorphism ring  $\End(E_{\calO})$ of $E_{\calO}=\BC/\calO$ is nothing but $\calO$. Set
\begin{equation} j_D=j_{\calO}:=j(E_{\calO}). \end{equation}
Set
\begin{equation} \label{eq:ell(O)}
\Ell(\calO):= \{j(E)\mid \End(E)\cong \calO\}\ \ (= \{E\mid \End(E)\cong \calO\}/\sim). 
\end{equation}
The Hilbert class polynomial $H_D(x)$ is defined as
\begin{equation} H_D(x)=H_{\calO}(x):=\prod_{j(E) \in \Ell(\calO)}(x-j(E)).\end{equation}
The theory of complex multiplication tells us (see  ~\cite{ATAEC})
\begin{theorem} Given $D$ and $\calO$. 
	\begin{enumerate}[1]
		\item The polynomial $H_D(x) \in \Z[x]$ is the minimal polynomial of $ j_D$ over $K$ of degree $h_D$, whose conjugates form exactly the set  $\Ell(\calO)=\{j(E_{\mathfrak{b}})\mid [\mathfrak{b}]\in \cl(\calO)\}$. 
		\item The field  $K( j_D)$ is the splitting field  of $H_D(x)$ over $K$
		and  is the ring class field $L$ of $\calO$ over $K$. 
		\item The action of $\cl(\calO)$  on  $\Ell(\calO)$ by $[\mathfrak{a}]j(E_{\mathfrak{b}})=j(E_{\mathfrak{a}^{-1}\mathfrak{b}})$ and  the Galois action of $\Gal(L/K)$  on  $\Ell(\calO)$  are compatible under the Artin map 
		\begin{equation}\label{eq:artin1}
		\theta: \Gal(L/K)\cong \cl(\calO),
		\end{equation} both are free and transitive. 
	\end{enumerate}  	
\end{theorem}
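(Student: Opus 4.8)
The plan is to reduce the statement to the analytic parametrization of elliptic curves with complex multiplication together with the first main theorem of complex multiplication, the latter being the only genuinely arithmetic ingredient. First I would treat the analytic content, namely the count in (1) and the simple transitivity of the $\cl(\calO)$-action in (3). Every $E/\BC$ with $\End(E)\cong\calO$ has the form $E_\Lambda=\BC/\Lambda$ where $\Lambda$ is homothetic to a proper fractional $\calO$-ideal, and $E_\Lambda\cong E_{\Lambda'}$ precisely when $\Lambda$ and $\Lambda'$ are homothetic, i.e. when $[\Lambda]=[\Lambda']$ in $\cl(\calO)$. Hence $[\mathfrak b]\mapsto j(E_{\mathfrak b})$ is a bijection $\cl(\calO)\xrightarrow{\sim}\Ell(\calO)$, giving $|\Ell(\calO)|=h_D$; transporting the regular action of $\cl(\calO)$ on itself along this bijection yields the action $[\mathfrak a]\,j(E_{\mathfrak b})=j(E_{\mathfrak a^{-1}\mathfrak b})$, which is therefore well defined, free, and transitive.

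Next I would check $H_D(x)\in\Z[x]$. Each $j(E_{\mathfrak b})$ is an algebraic integer (for instance because $E_{\calO}$ acquires good reduction over a finite extension of each local field, or via integrality of the classical modular polynomial). For any $\sigma\in\Gal(\overline{\Q}/\Q)$ the conjugate $E^\sigma$ satisfies $\End(E^\sigma)\cong\End(E)\cong\calO$, so $\sigma$ permutes the finite set $\Ell(\calO)$; hence the coefficients of $H_D$, being symmetric functions of its roots, are $\Gal(\overline{\Q}/\Q)$-invariant algebraic integers and thus lie in $\Z$. In particular $H_D\in K[x]$, so its splitting field $L'\colonequals K(\Ell(\calO))$ is Galois over $K$.

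The remaining assertions follow from the first main theorem of complex multiplication: for every prime $\mathfrak p$ of $K$ of degree one, prime to $f$ and unramified in $L'$, the Artin symbol $\sigma_{\mathfrak p}$ satisfies $j(E_{\mathfrak b})^{\sigma_{\mathfrak p}}=j(E_{\mathfrak p^{-1}\mathfrak b})$. I would prove this by reducing modulo a prime above $\mathfrak p$: the degree-$N\mathfrak p$ cyclic isogeny $E_{\mathfrak b}\to E_{\mathfrak p^{-1}\mathfrak b}$ (induced by $\mathfrak b\subseteq\mathfrak p^{-1}\mathfrak b$) reduces to the Frobenius isogeny, so $\sigma_{\mathfrak p}$ acts on $\Ell(\calO)$ exactly as translation by $[\mathfrak p]$. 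Since every class in $\cl(\calO)$ contains such a prime $\mathfrak p$ and, by Chebotarev, the elements $\sigma_{\mathfrak p}$ exhaust $\Gal(L'/K)$, the image of $\Gal(L'/K)$ in $\mathrm{Sym}(\Ell(\calO))$ is exactly the simply transitive image of $\cl(\calO)$. Consequently $\Gal(L'/K)$ acts transitively on $\Ell(\calO)$, so $H_D$ is irreducible over $K$, equals the minimal polynomial of $j_D$, and $L'=K(j_D)$; moreover $\sigma_{\mathfrak p}\mapsto[\mathfrak p]$ is an isomorphism $\Gal(K(j_D)/K)\cong\cl(\calO)$, namely the Artin map, under which the two actions are compatible and both free and transitive. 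Finally, this identification shows the kernel of the Artin map on $I_K(f)$ is the congruence subgroup $P_{K,\Z}(f)$, so by the uniqueness in class field theory $K(j_D)$ coincides with the ring class field $L$.

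The main obstacle is the first main theorem itself, that is, matching $\mathrm{Frob}_{\mathfrak p}$ on the reduced curve with the CM isogeny of degree $N\mathfrak p$. This requires Deuring's reduction and lifting theorems to control the reduction of CM curves and their isogenies at almost all primes, and it is exactly where the arithmetic, as opposed to the purely group-theoretic bookkeeping of the first three steps, is concentrated.
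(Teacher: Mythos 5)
The paper offers no proof of this theorem at all --- it is quoted as standard complex multiplication theory with a pointer to Silverman's \emph{Advanced Topics} --- and your sketch is precisely the canonical argument found in that reference (and in Cox, \S 10--11): parametrization of $\Ell(\calO)$ by $\cl(\calO)$ via homothety classes of proper fractional ideals, integrality and $\Gal(\overline{\Q}/\Q)$-stability of the root set to get $H_D(x)\in\Z[x]$, the First Main Theorem via reduction of the degree-$N\fp$ isogeny to Frobenius at degree-one primes, Chebotarev to see that $\Gal(L'/K)$ acts by translations and simply transitively (whence irreducibility, $L'=K(j_D)$, and the compatibility of the two actions), and class field theory to identify $L'$ with the ring class field. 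Your outline is correct; the one step to make explicit in a full write-up is the final identification $L'=L$, where you should either verify that $L'/K$ is unramified outside $f$ so that the Artin kernel is a congruence subgroup for the modulus $f\calO_K$ equal to $P_{K,\Z}(f)$, or sidestep conductor issues by comparing sets of split primes (a degree-one prime $\fp\nmid f$ splits completely in $L'$ iff $[\fp]=1$ in $\cl(\calO)$ iff it splits completely in $L$) and invoking the inclusion criterion.
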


For any $j$-invariant $j\in\Ell(\calO)\subset L$, we can take an elliptic curve $E/L$ such that $j(E)=j$. Then $\End(E)=\End_L(E)\cong \calO$ as $K\subset L$. Let $[\cdot]_E: \calO\rightarrow \End(E)$ be an isomorphism such that $(E,[\cdot]_E)$ is normalized(see \cite{ATAEC}). Then for any $E'\in \Ell(\calO)$ and a non-constant isogeny $\varphi: E\rightarrow E'$, we have $[\cdot]_{E'}=\varphi_{*}([\cdot]_E):=\frac{1}{\deg \varphi}\varphi\circ [\cdot]_E\circ \hat{\varphi}$.

\subsection{Oriented supersingular elliptic curves}\label{sec:osidh}
In this subsection, we briefly recall some definitions and known results about oriented supersingular elliptic curves in \cite{Onuki} which will be used in \S 5. Suppose $p$ does not split in $K=\Q(\sqrt{D})$. Let $L$ be the ring class field of $\calO$. We assume $\fP$ is a prime in $L$ above $p$ such that every elliptic curve in $\Ell(\calO)$ has a good reduction at $\fP$(otherwise, we can take a finite extension $L'/L$ and such a prime in $L'$ as $\Ell(\calO)$ is finite). Then $\bar{E} =E \bmod \fP$ is supersingular by Deuring's reduction theorem\cite[Chapter 13, Theorem 12]{Lang}. Define a map $\rho$ by the reduction modulo $\fP$ as 
\[\rho: \Ell(\calO) \rightarrow \rho(\Ell(\calO)),\ E\rightarrow (\bar{E},[\cdot]_{\bar{E}}),\]
where $[\alpha]_{\bar{E}}=[\alpha]_E \bmod \fP$ for any $\alpha\in \calO$. The map $[\cdot]_{\bar{E}}$ induces a ring homomorphism:
\[\iota:\ K \hookrightarrow \End(\bar{E})\otimes \Q.\]
Then a pair $(\bar{E},\iota)$ is called a $K$-oriented elliptic curve; furthermore, it is $\calO$-oriented(resp. primitive $\calO$-oriented) if $\iota(\calO)\subset \End(\bar{E})$(resp. $\iota(\calO)=\End(\bar{E})\cap\iota(K)$).

Moreover, suppose $\fa$ is a proper integral ideal of $\calO$ which is prime to $p$ and $E\in \Ell(\calO)$. Let $E'=[\fa]*E$. Then there is an
isogeny $\varphi: E\rightarrow E'$ with kernel $E[\fa]=\bigcap_{\alpha\in \fa}\ker([\alpha]_E)$\cite[Chapter 2]{ATAEC}. By the reduction modulo $\fP$, we have $\phi$ corresponds to $\bar{\varphi}: \bar{E}\rightarrow \bar{E'}$
whose kernel is $\bar{E}[\fa]:=\bigcap_{\alpha\in \fa}\ker([\alpha]_{\bar{E}})$. Thus there is an action of $\cl(\calO)$ on $\rho(\Ell(\calO))$ given by
\[[\fa]* (\bar{E},\iota) = (\bar{E'},\bar{\varphi}_{*}(\iota)),\] 
where $\bar{\varphi}_{*}(\iota)(\alpha)=\frac{1}{\deg(\bar{\varphi})}\bar{\varphi}\circ\iota(\alpha)\circ\hat{\bar{\varphi}}$.  The following lemma can be found in \cite{Onuki}.
\begin{lemma}
	Let $K$  be an imaginary quadratic field such that $p$ does not split in $K$, and $\calO$ an order in $K$ such that $p$ does not divide the conductor of $\calO$. Then the ideal class group $\cl(\calO)$ acts freely and transitively on $\rho(\Ell(\calO))$.
\end{lemma}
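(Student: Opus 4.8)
The plan is to transport the free and transitive action of $\cl(\calO)$ on $\Ell(\calO)$ supplied by the theory of complex multiplication (the action $[\fa]j(E_\fb)=j(E_{\fa^{-1}\fb})$ recalled above, which is free and transitive) across the reduction map $\rho$, which is $\cl(\calO)$-equivariant by construction. Indeed, for a proper integral $\calO$-ideal $\fa$ prime to $p$ and $E\in\Ell(\calO)$, the isogeny $\varphi\colon E\to E'=[\fa]*E$ with kernel $E[\fa]$ reduces to $\bar\varphi\colon\bar E\to\bar{E'}$ with kernel $\bar E[\fa]$, and the normalization $[\cdot]_{E'}=\varphi_*([\cdot]_E)$ reduces to $[\cdot]_{\bar{E'}}=\bar\varphi_*([\cdot]_{\bar E})$; hence $\rho([\fa]*E)=[\fa]*\rho(E)$. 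Since the action upstairs is transitive and $\rho$ surjects onto $\rho(\Ell(\calO))$, transitivity downstairs is immediate, and the whole weight of the lemma rests on \emph{freeness}.

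The first ingredient I would establish is that each reduced pair $(\bar E,\iota)$ is a \emph{primitive} $\calO$-orientation, i.e.\ $\iota(K)\cap\End(\bar E)=\iota(\calO)$; this is exactly where the hypothesis $p\nmid f$ (with $f$ the conductor) enters. Reduction carries $\calO=\End(E)$ into $\End(\bar E)$, so $\calO':=\iota^{-1}(\End(\bar E))$ is an order of $K$ containing $\calO$, and one checks $\calO'=\calO$ locally. For a prime $\ell\neq p$, good reduction at $\fP$ gives an isomorphism $T_\ell E\xrightarrow{\sim}T_\ell\bar E$ compatible with the $K$-action, and since $T_\ell E$ is an invertible $\calO\otimes\Z_\ell$-module whose full ring of multipliers is $\calO\otimes\Z_\ell$, an element $\iota(x)$ preserving $T_\ell\bar E$ forces $x\in\calO\otimes\Z_\ell$; thus $\calO'\otimes\Z_\ell=\calO\otimes\Z_\ell$. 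At $\ell=p$, the coprimality $p\nmid f$ gives $\calO\otimes\Z_p=\calO_K\otimes\Z_p$, which is already maximal, so $\calO\otimes\Z_p\subseteq\calO'\otimes\Z_p\subseteq\calO_K\otimes\Z_p$ collapses to an equality. Hence $\calO'=\calO$.

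For freeness, suppose $[\fa]$ fixes $(\bar E,\iota)$. Then there is an isomorphism of oriented curves $\psi\colon(\bar{E'},\bar\varphi_*\iota)\to(\bar E,\iota)$, where $\bar\varphi\colon\bar E\to\bar{E'}$ has kernel $\bar E[\fa]$. Put $\theta=\psi\circ\bar\varphi\in\End(\bar E)$. A direct computation with the intertwining relations of oriented maps gives $\theta\circ\iota(x)=\iota(x)\circ\theta$ for all $x\in K$, so $\theta$ lies in the centralizer of $\iota(K)$ in the quaternion algebra $B_{p,\infty}=\End(\bar E)\otimes\Q$; since $K$ is a maximal subfield, this centralizer is $\iota(K)$ itself, whence $\theta\in\iota(K)\cap\End(\bar E)=\iota(\calO)$ by primitivity. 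Writing $\theta=\iota(\alpha)$ with $\alpha\in\calO$, comparison of reduced norms gives $\Nrd(\theta)=\deg\bar\varphi=\mathrm{N}\fa=\alpha\bar\alpha$, and comparison of kernels gives $\bar E[\fa]=\ker\theta=\bar E[\alpha\calO]$. Applying the bijection between proper $\calO$-ideals prime to $p$ and their oriented kernels, I conclude $\fa=\alpha\calO$ is principal, so $[\fa]=1$ in $\cl(\calO)$; this is freeness, and together with transitivity the lemma follows.

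I expect the primitivity step to be the main obstacle: the equivariance bookkeeping and the endomorphism-to-principal-ideal argument are essentially formal, but the identity $\iota(K)\cap\End(\bar E)=\iota(\calO)$ is the genuine geometric content of the hypothesis $p\nmid f$, and verifying the prime-to-$p$ local comparison rigorously (that reduction neither shrinks nor enlarges the $K$-order at $\ell\neq p$) draws on Deuring's reduction theory rather than class field theory alone. A secondary technical point is to make the kernel–ideal dictionary for oriented supersingular curves precise enough, for ideals prime to $p$, to pass from the equality of kernels $\bar E[\fa]=\bar E[\alpha\calO]$ to the equality of ideals $\fa=\alpha\calO$.
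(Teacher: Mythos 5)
Your proposal is correct in substance, but note that the paper itself contains no proof of this lemma at all: it is stated with the remark that it ``can be found in \cite{Onuki}'', and Onuki's argument runs along a genuinely different line from yours. Onuki first proves that the reduction map $\rho$ is \emph{injective} as a map to $K$-oriented curves (the key input being Deuring's lifting theorem), and then transfers both transitivity and freeness formally from the free and transitive action of $\cl(\calO)$ on $\Ell(\calO)$ through the equivariant bijection $\rho$: if $[\fa]*\rho(E)=\rho(E)$ then $\rho([\fa]*E)=\rho(E)$, injectivity gives $[\fa]*E=E$, and freeness upstairs gives $[\fa]=1$. You prove transitivity the same way (equivariance plus surjectivity onto the image), but you establish freeness directly in characteristic $p$: primitivity of the reduced orientation when $p\nmid f$ (your Tate-module argument at $\ell\neq p$, together with the collapse $\calO\otimes\Z_p=\calO_K\otimes\Z_p$, is correct), the double-centralizer theorem forcing $\theta=\psi\circ\bar\varphi$ into $\iota(K)$ and hence into $\iota(\calO)$, and finally the kernel--ideal dictionary. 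What your route buys is independence from the lifting theorem and an explicit identification of the stabilizer inside the quaternion algebra; what it costs is exactly the step you flag yourself: the passage from $\bar E[\fa]=\bar E[\alpha\calO]$ to $\fa=\alpha\calO$ is asserted rather than proved. That gap is real but closable with standard tools: for an invertible ideal $\fa$ prime to $p$, each $T_\ell\bar E$ with $\ell\neq p$ is an invertible $\calO\otimes\Z_\ell$-module, so $\fa\otimes\Z_\ell$ is recovered as the annihilator in $\calO\otimes\Z_\ell$ of the $\ell$-part of $\bar E[\fa]$; alternatively, pull the equality of kernels back along the isomorphism on prime-to-$p$ torsion to $E$ in characteristic $0$ and compare lattices (writing $E=\BC/\fb$, one has $E[\fa]=\fa^{-1}\fb/\fb$, so equality of kernels forces $\fa=\alpha\calO$ by invertibility of $\fb$). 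With either patch, your argument is complete and constitutes a self-contained alternative to the proof the paper delegates to Onuki.
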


\section{Factorization of $p\calO_M$}

In this section, we fix $D$ and $\calO$, and hence $K, f$  as in \S~\ref{sec:class field}, and $F,F^{+},\mu$ as in \S~\ref{sec:genus}. Then $L=K( j_D)$ is the ring class field  of $\calO$, and $M=L\cap \R=\Q( j_D)$ is the maximal real subfield of $L$. The main goal of this section is to explicitly describe the prime ideal factorization of $p \calO_M$ for any $p$ which does not split in $K$. The case when $p$ splits in $K$, which is simpler, will also be described in the end of this section.

Before stating our result, we need to know more about the Galois extension $L/\Q$. Let $G=\Gal(L/\Q)$. Let $\tau$ be the nontrivial element in $\Gal(L/M)$ (i.e. the complex conjugation). Identifying $\Gal(L/K)=\cl(\calO)$ via the isomorphism \eqref{eq:artin}, then we have (see \cite[\S 9]{Cox89})
\begin{equation}\label{eq:groupG}
	G=\cl(\calO)\rtimes \langle \tau \rangle, \quad \tau \sigma \tau =\sigma^{-1} \text{ for } \sigma\in \cl(\calO). 
\end{equation}
Then $ \Gal(L/F)=\cl(\calO)^2$ and $\Gal(L/F^{+})=\cl(\calO)^2\rtimes \langle\tau\rangle.$
\[
\begin{tikzcd}[row sep=tiny, column sep = large]
	&  L=K(j_D)  \ar[dd, dash, "\cl(\calO)^2"']  \ar[dddd,dash, bend left=40, "\cl(\calO)"]\\
	M=\Q(j_D) \ar[ru, dash, "\tau"]          &      \\
	& F     \\
	F^{+}=F\cap \R  \ar[ru, dash, "\tau"] \ar[uu, dash]              &      \\
	&  K=\Q(\sqrt{D}) \ar[uu, dash]    \\
	\Q \ar[ru, dash,"\tau"] \ar[uu, dash]               &      \\                        
\end{tikzcd}
\]

\begin{lemma}\label{lem:G_L}
$(1)$ The conjugate class of $\tau$ in $G$ is $\cl(\calO)^2\tau=\{ \sigma^2 \tau \mid \sigma \in \cl(\calO)\}.$ 
		
$(2)$   The centralizer $C_G(\tau) =\{ x\in G\mid  x\tau x^{-1}=\tau \}$  of $\tau$ has order $\# C_G(\tau) =2^{\mu}.$
\end{lemma}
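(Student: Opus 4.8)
The plan is to work directly with the generalized dihedral presentation \eqref{eq:groupG}, abbreviating $A=\cl(\calO)$, so that every element of $G$ is either $\sigma$ or $\sigma\tau$ with $\sigma\in A$, and $\tau\sigma\tau=\sigma^{-1}$. The first fact I would record is that, because $A$ is abelian, the relation $\tau\sigma^{-1}=\sigma\tau$ holds, and as a consequence every element of the form $\sigma\tau$ is an involution: $(\sigma\tau)^2=\sigma(\tau\sigma\tau)=\sigma\sigma^{-1}=1$. This involution property is what makes both parts fall out cleanly.

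For part (1), I would simply compute the conjugates of $\tau$ by each type of element. Conjugation by $\sigma\in A$ gives $\sigma\tau\sigma^{-1}=\sigma(\tau\sigma^{-1})=\sigma(\sigma\tau)=\sigma^2\tau$, and conjugation by $\sigma\tau$ gives, using $(\sigma\tau)^{-1}=\sigma\tau$, the same value $\sigma^2\tau$. Hence the conjugacy class of $\tau$ is exactly $\{\sigma^2\tau\mid\sigma\in A\}=A^2\tau$, which is the claimed set $\cl(\calO)^2\tau$; note $A^2$ is a subgroup precisely because $A$ is abelian.

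For part (2) there are two equivalent routes, and I would use whichever reads more cleanly. The quickest is the orbit-counting identity $\#C_G(\tau)=\#G/\#(\text{conjugacy class of }\tau)$. Here $\#G=2h_D$, while $\#(A^2\tau)=\#A^2=\#A/\#A[2]$ since the squaring endomorphism of $A$ has image $A^2$ and kernel the $2$-torsion subgroup $A[2]$; and $\#A[2]=2^{\rk_2 A}=2^{\mu-1}$ by the genus-theoretic identity \eqref{eq:mu}. This yields $\#C_G(\tau)=2h_D/(h_D/2^{\mu-1})=2^{\mu}$. Alternatively I would determine the centralizer outright: $\sigma$ (respectively $\sigma\tau$) commutes with $\tau$ if and only if $\sigma^2\tau=\tau$, i.e.\ $\sigma\in A[2]$, so $C_G(\tau)=A[2]\rtimes\langle\tau\rangle$ is an elementary abelian $2$-group of order $2\cdot 2^{\mu-1}=2^{\mu}$.

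The computations are entirely elementary, so I do not anticipate a genuine obstacle; the only non-group-theoretic input is the equality $\rk_2\cl(\calO)=\mu-1$, which is furnished directly by \eqref{eq:mu}. The single point requiring mild care is bookkeeping with the dihedral relations—in particular verifying $(\sigma\tau)^{-1}=\sigma\tau$ and checking that conjugation by the two kinds of elements produces the same conjugate of $\tau$—but this is routine once the involution property above is in hand.
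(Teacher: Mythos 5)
Your proof is correct and coincides with the paper's approach: the paper's proof of this lemma is literally ``Direct computation,'' and your argument carries out exactly that computation, with the conjugation formulas $\sigma\tau\sigma^{-1}=(\sigma\tau)\tau(\sigma\tau)^{-1}=\sigma^{2}\tau$ giving part (1) and the identification $C_G(\tau)=\cl(\calO)[2]\rtimes\langle\tau\rangle$ of order $2\cdot 2^{\rk_2\cl(\calO)}=2^{\mu}$ (via \eqref{eq:mu}) giving part (2). Both of your routes for (2) --- orbit-stabilizer counting and explicit determination of the centralizer --- are valid, and your verification that elements $\sigma\tau$ are involutions is exactly the bookkeeping needed.
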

\begin{proof}
	Direct computation.
\end{proof}

In what follows, $\fp$ will always denote a prime of $M$ above $p$;  $\fP$ and $\fP'$ will denote primes of $L$ above $p$. We write $D_\fP=D_\fP(L/\Q)$ and $I_\fP=I_\fP(L/\Q)$ for simplification.

\begin{lemma}\label{lem:central} 
	\begin{enumerate}[1]
		\item  If there exists some $\fP'$ such that $D_{\fP'}=\langle \tau \rangle$, then 
		\begin{equation}\label{eq:central}
			\# \{ \fP \mid  \text{ } D_{\fP} = \langle \tau \rangle \} = \frac{\# C_G(\tau)}{\# D_{\fP'}} =2^{\mu-1}.
		\end{equation}
		
		\item  If there exists some $\fP'$ such that $I_{\fP'}=\langle \tau \rangle$, then 
		\begin{equation}\label{eq:central_inert}
			\# \{ \fP \mid I_{\fP} = \langle \tau \rangle \} = \frac{\# C_G(\tau)}{\# D_{\fP'}} =\frac{2^{\mu}}{\# D_{\fP'}}.
		\end{equation}
		
		\item Suppose $\sigma \in \cl(\calO)$, $\sigma\neq 1$, $\sigma^2=1$ and $\sigma\tau=\tau\sigma$. If there exists some $\fP'$ such that $D_{\fP'}=\{ 1, \sigma, \tau, \sigma\tau\}$ and $I_{\fP'}=\{1,\sigma\tau\}$, then 
		\begin{equation}\label{eq:central2}
			\# \{ \fP \mid D_{\fP} \supset \langle \tau \rangle, I_{\fP}\neq \langle \tau\rangle \} = \frac{\# C_G(\tau)}{\# D_{\fP'}} = 2^{\mu-2}.
	\end{equation} \end{enumerate}
\end{lemma}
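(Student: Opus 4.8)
The plan is to exploit the standard parametrization of the primes of $L$ above $p$ by the left cosets $G/D_{\fP'}$, together with the conjugation behaviour of decomposition and inertia groups. Since $G$ acts transitively on the set of primes above $p$ with $\fP'$ having stabilizer $D_{\fP'}$, every such prime is of the form $g\fP'$ for some $g\in G$, and $g\fP'=g'\fP'$ if and only if $gD_{\fP'}=g'D_{\fP'}$. Moreover $D_{g\fP'}=gD_{\fP'}g^{-1}$ and $I_{g\fP'}=gI_{\fP'}g^{-1}$. Thus each of the three counting problems becomes: determine the set $S\subseteq G$ of elements $g$ for which $g\fP'$ carries the prescribed decomposition/inertia data, and then count the left cosets of $D_{\fP'}$ contained in $S$. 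In every case the point is that $S$ turns out to be the centralizer $C_G(\tau)$, and that $D_{\fP'}\subseteq C_G(\tau)$, so the answer is $\#C_G(\tau)/\#D_{\fP'}=2^\mu/\#D_{\fP'}$ with $\#C_G(\tau)=2^\mu$ by Lemma~\ref{lem:G_L}(2).

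For (1), the condition $D_{g\fP'}=\langle\tau\rangle$ reads $g\tau g^{-1}=\tau$, i.e. $g\in C_G(\tau)$, because a cyclic group of order two is determined by its nontrivial element. As $D_{\fP'}=\langle\tau\rangle\subseteq C_G(\tau)$, the set $S=C_G(\tau)$ is a disjoint union of left cosets of $D_{\fP'}$ and the number of primes is $\#C_G(\tau)/\#D_{\fP'}=2^\mu/2=2^{\mu-1}$. Part (2) is identical at the level of $g$: the requirement $I_{g\fP'}=\langle\tau\rangle$ again forces $g\in C_G(\tau)$; the only new feature is that $D_{\fP'}$ may be strictly larger than $\langle\tau\rangle$. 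Here I would invoke that the inertia group is normal in the decomposition group, so $\langle\tau\rangle=I_{\fP'}\trianglelefteq D_{\fP'}$, and that an order-two subgroup is normalized exactly when it is centralized, whence $D_{\fP'}\subseteq N_G(\langle\tau\rangle)=C_G(\tau)$. Thus $S=C_G(\tau)$ is again a union of $D_{\fP'}$-cosets and the count is $\#C_G(\tau)/\#D_{\fP'}=2^\mu/\#D_{\fP'}$.

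The only genuinely delicate case is (3), where I would argue as follows. Writing $D_{g\fP'}=\{1,\,g\sigma g^{-1},\,g\tau g^{-1},\,g\sigma\tau g^{-1}\}$ and $I_{g\fP'}=\{1,\,g\sigma\tau g^{-1}\}$, the requirement $D_{g\fP'}\supseteq\langle\tau\rangle$ means $\tau$ equals one of the three nontrivial elements. It cannot equal $g\sigma g^{-1}$, since $\sigma$ lies in the normal subgroup $\cl(\calO)$ while $\tau$ does not; so either $g\tau g^{-1}=\tau$, i.e. $g\in C_G(\tau)$, or $g\sigma\tau g^{-1}=\tau$, i.e. $I_{g\fP'}=\langle\tau\rangle$. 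These two possibilities are mutually exclusive, for if both held then $g\sigma g^{-1}=1$, forcing $\sigma=1$. The extra condition $I_{g\fP'}\neq\langle\tau\rangle$ discards exactly the second possibility, leaving $S=C_G(\tau)$; note that for $g\in C_G(\tau)$ the same computation gives $g\sigma\tau g^{-1}\neq\tau$ automatically, so no admissible prime is lost. Since $\sigma$ commutes with $\tau$ by hypothesis, $D_{\fP'}=\{1,\sigma,\tau,\sigma\tau\}\subseteq C_G(\tau)$, and the number of primes is $\#C_G(\tau)/\#D_{\fP'}=2^\mu/4=2^{\mu-2}$.

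I expect the main obstacle to be the bookkeeping in (3): one must correctly disentangle the two ways in which $\tau$ can enter the Klein four-group $D_{g\fP'}$, and then check that the inertia constraint deletes precisely the unwanted coset family while retaining every valid prime. Everything else rests on the clean observation that normalizer and centralizer coincide for an order-two subgroup, which is exactly what makes the $D_{\fP'}$-cosets organize the count as $\#C_G(\tau)/\#D_{\fP'}$.
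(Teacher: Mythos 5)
Your proof is correct and takes essentially the same route as the paper's: parametrize the primes above $p$ by left cosets of $D_{\fP'}$ under the transitive $G$-action, identify the admissible set of $g$ with $C_G(\tau)$ in each case, and divide $\#C_G(\tau)=2^{\mu}$ (Lemma~\ref{lem:G_L}) by $\#D_{\fP'}$. The details you make explicit---that $D_{\fP'}\subseteq N_G(\langle\tau\rangle)=C_G(\tau)$ in part (2), and in part (3) that normality of $\cl(\calO)$ excludes $g\sigma g^{-1}=\tau$ while the two remaining possibilities are mutually exclusive---are exactly the points the paper's terser proof leaves implicit (it invokes the conjugacy class of $\tau$ instead), so your write-up is a slightly more careful rendering of the same argument.
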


\begin{proof} Note that $G$ acts on primes above $p$ in $L$ transitively,  $D_{x(\fP')}=xD_{\fP'} x^{-1}$ (resp. $I_{x(\fP')}=xI_{\fP'} x^{-1}$) for $x\in G$. 
	
	(1) and (2): We have  $D_{x(\fP')}=\langle\tau \rangle$ (resp. $I_{x(\fP')}=\langle\tau \rangle$) if and only if $x\in C_G(\tau)$. So $C_G(\tau)$ acts transitively on $\{ \fP \mid D_{\fP} = \langle \tau \rangle \}$ (resp. $\{ \fP \mid I_{\fP} = \langle \tau \rangle \}$), with  the stabilizer  $\langle \tau\rangle$ by definition. Then (1) and (2) follow from Lemma~\ref{lem:G_L}.

	(3) Write $\fP=x\fP'$ for some $x\in G$. Then $D_{\fP}=\{1, x\sigma x^{-1}, x\tau x^{-1}, x\sigma\tau x^{-1}\}$ and $I_{\fP}=\{1, x\sigma\tau x^{-1}\}$. Note that  $\sigma$ is not conjugate to $\tau$. Then
	the condition that $ D_{\fP} \supset \langle \tau \rangle$ and $I_{\fP}\neq \langle \tau\rangle$ is equivalent to the condition $x \tau x^{-1}=\tau$, i.e., $x\in C_G(\tau)$. Then (3) also follows from Lemma~\ref{lem:G_L}.
\end{proof}

\begin{theorem}~\label{them:prime factorization}
	Fix $D,\ \calO$ and hence the conductor $f$ and the fields $K, L, M, F$ and $F^{+}$. Assume $p$ does not split in $K$, and let $\fp_K$ be the unique prime of $K$ lying above $p$. 
	
$(A)$ Suppose $p\nmid f$. Then all primes of $M$ above $p$ are of degree $1$ or $2$. Let $h=h_D$ be given by \eqref{eq:classno} and $\mu$ be given by \eqref{eq:mu}. 
	\begin{enumerate}[1]
		\item   If $p \nmid D_K$, then 
		\[p\calO_M=\fp_1   \cdots\fp_g\]
	and $t$ of the primes $\fp_i$ are of degree $1$, where $t=2^{\mu-1}$ if $p$ splits completely in $F^+$ and $0$ otherwise (see Lemma~\ref{lem:p0}(1)) and $g=\frac{h+t}{2}$.  
		
	 If $D\in \{-p,-2p,-4p\}$, then  
			\[p\calO_M=\begin{cases} \fp^2_{1}\cdots \fp^2_{h/2},& \text{if}\ p\equiv 1\bmod{4}; \\ 
			\fp_1\fp^2_{2}\cdots \fp^2_{(h+1)/2},& \text{if}\ p\not\equiv 1\bmod{4};
			\end{cases}\]
where $\deg(\fp_i)=1$ for all $i$.
	
		\item Assume $p\mid D_K$ and $D\notin \{-p,-2p,-4p\}$.
		
		\begin{enumerate}[i]
			\item  If $p$ is unramified in $F^{+}$,  put  (see Lemma~\ref{lem:p0}(3))
			 \[ s=2^{\mu-2},\quad t= \begin{cases}
			 	2^{\mu-2}, &  \text{if } p \text{ splits completely in }F^{+};\\ 0, &\text{otherwise};
			 \end{cases} \quad  g = \frac{h+2t }{4} +2^{\mu-3}, \] 
			then \[p\calO_M=\fp_1\cdots\fp_{s} \fp_{s+1}^2\cdots \fp^2_{s+t} \cdots \fp^2_{g}  \]
			with $\fp_{s+1},\cdots, \fp_{s+t}$ being exactly the primes of $M$ above $p$ of degree $1$.
			
			\item If $p$ is ramified in $F^{+}$, put (see Lemma~\ref{lem:p0}(4))
			 \[ t=  \begin{cases}
			 	0, & \text{if}\ f_p(F/F^{+})=1;\\ 2^{\mu-2}, & \text{if}\ f_p(F/F^{+})=2;
			 \end{cases} \quad g=\frac{h+2t}{4}, \] 
			then \[p\calO_M=(\fp_1  \cdots\fp_{g})^{2} \]
			and the number of $i$'s that $\deg(\fp_i)=1$ is  $t$. 
		\end{enumerate}
	\end{enumerate}		

$(B)$ If $p\mid f$, then the factorization of $p\calO_{M^{(p)}}$ is given by $(A)$ and every prime above $p$ in $\calO_{M^{(p)}}$ is totally ramified in $M/M^{(p)}$ whose degree  $h_D^{(p)}$ is given by \eqref{eq:classno2}.
\end{theorem}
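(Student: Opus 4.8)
The plan is to convert the factorization of $p\calO_M$ into group-theoretic data attached to a prime $\fP$ of $L$ above $p$, namely its decomposition group $D_\fP$ and inertia group $I_\fP$ inside $G=\Gal(L/\Q)=\cl(\calO)\rtimes\langle\tau\rangle$ from \eqref{eq:groupG}. Since $M=L^{\langle\tau\rangle}$, the primes $\fp$ of $M$ above $p$ are in bijection with the double cosets $\langle\tau\rangle\backslash G/D_\fP$, and for the prime below $\fP$ the local invariants are read off from the tower $\Q\subseteq M\subseteq L$ via $e_\fp(M/\Q)=|I_\fP|/|I_\fP\cap\langle\tau\rangle|$ and $f_\fp(M/\Q)=(|D_\fP|/|I_\fP|)\big/(|D_\fP\cap\langle\tau\rangle|/|I_\fP\cap\langle\tau\rangle|)$, using that the decomposition (resp.\ inertia) group of $\fP$ in $L/M$ is $D_\fP\cap\langle\tau\rangle$ (resp.\ $I_\fP\cap\langle\tau\rangle$). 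Thus everything reduces to pinning down $D_\fP$ and $I_\fP$ up to conjugacy and then counting primes of each type, for which Lemma~\ref{lem:central} is exactly tailored, with $\#C_G(\tau)=2^\mu$ from Lemma~\ref{lem:G_L}.

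For part $(A)$, where $p\nmid f$, I would first use that the ring class field $L/K$ is unramified at $\fp_K$ (it ramifies only at primes dividing $f$), so $I_\fP\cap\cl(\calO)=I_\fP(L/K)=1$; hence $I_\fP$ injects into $\Gal(K/\Q)$ and $|I_\fP|=1$ or $2$ according as $p$ is inert or ramified in $K$. Next I would compute the $K$-decomposition group through the Artin isomorphism \eqref{eq:artin}: $D_\fP(L/K)$ is generated by the image of the class $[\fp_K\cap\calO]$, so its order equals the order of that class in $\cl(\calO)$. Here Lemma~\ref{lem:p0}(1)--(2) is the decisive input, since it tells me exactly when $\fp_K\cap\calO$ is principal. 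When it is (the inert case, or the exceptional ramified cases $D\in\{-p,-2p,-4p\}$) we get $D_\fP(L/K)=1$ and $D_\fP$ of order $\le 2$; when it is not, $\fp_K^2=p\calO_K$ forces the class to be a nontrivial $2$-torsion element, so $D_\fP(L/K)$ has order $2$ and $D_\fP$ has order $4$. In every non-split case this gives $f_\fP(L/\Q)\le 2$, whence all primes $\fp$ have degree $\le 2$. Assembling $D_\fP$ and $I_\fP$ then produces their shapes ($\langle\tau\rangle$, $\langle\sigma\tau\rangle$, or $\{1,\sigma,\rho\tau,\sigma\rho\tau\}$ as in the hypotheses of Lemma~\ref{lem:central}), after which the $e,f$ formulas yield the degree and ramification of each $\fp$, and the counts \eqref{eq:central}--\eqref{eq:central2} produce the numbers $s,t,g$. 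The genus-theoretic dictionary I need at the last step is that $p$ splits completely in $F^+$ exactly when $D_\fP\subseteq\Gal(L/F^+)=\cl(\calO)^2\rtimes\langle\tau\rangle$, and that $f_p(F/F^+)=2$ detects the position of the Frobenius/inertia inside $\Gal(F/\Q)\cong(\Z/2)^\mu$; these are supplied by Lemma~\ref{lem:p0}(1),(3),(4) together with Proposition~\ref{prop:genus_field1}.

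For part $(B)$, where $p\mid f$, I would simply invoke Proposition~\ref{prop:totrami}(2): $M/M^{(p)}$ is totally ramified of degree $h_D^{(p)}$ (given by \eqref{eq:classno2}) at every prime above $p$, while $p\nmid f^{(p)}$ makes part $(A)$ applicable to $M^{(p)}$. Lifting the factorization of $p\calO_{M^{(p)}}$ through this totally ramified extension multiplies every ramification index by $h_D^{(p)}$ and leaves residue degrees unchanged, giving the stated result.

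I expect the main obstacle to be the ramified non-principal case $p\mid D_K$, $D\notin\{-p,-2p,-4p\}$ (case $(A)(2)$), where $D_\fP$ has order $4$ and one must decide precisely which order-two subgroup is the inertia group and whether $D_\fP$ meets $\Gal(L/F^+)$ --- equivalently, whether the relevant Frobenius/inertia element is a square in $\cl(\calO)$ or lands in $\cl(\calO)^2\rtimes\langle\tau\rangle$. Verifying that these subgroups are genuinely realized as some $D_{\fP'},I_{\fP'}$, so that the existence hypotheses of Lemma~\ref{lem:central}(3) apply, and matching them against the splitting conditions in Lemma~\ref{lem:p0}(3)--(4), is the truly delicate bookkeeping, especially at $p=2$ where the genus characters demand a separate congruence analysis.
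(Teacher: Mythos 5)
Your proposal is correct and follows essentially the same route as the paper: reduce everything to the pair $(D_\fP, I_\fP)$ inside $G=\cl(\calO)\rtimes\langle\tau\rangle$, compute $D_\fP(L/K)$ and $I_\fP$ via the Artin map and the principality criterion of Lemma~\ref{lem:p0}, translate the genus conditions into $D_\fP\subseteq\cl(\calO)^2\rtimes\langle\tau\rangle$, count with Lemma~\ref{lem:central} (whose existence hypotheses you rightly flag as the delicate point in case (A)(2)), and dispose of (B) by Proposition~\ref{prop:totrami}. The only cosmetic difference is that you obtain the total number $g$ of primes from the double-coset parametrization $\langle\tau\rangle\backslash G/D_\fP$, whereas the paper gets it from the identity $\sum_{\fp\mid p} e_\fp(M/\Q)\deg(\fp)=h$ after determining $s$ and $t$.
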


\begin{proof}
 It suffices to show (A). (B) follows from (A) and Proposition~\ref{prop:totrami}. Now assume conditions in (A). We shall use the following well-known formula 
 \begin{equation}\label{eq:efg}
  \sum_{\fp\mid p} e_{\fp}(M/\Q) \deg(\fp) =[M:\Q]=h.
 \end{equation}
%The above discussion proves that both $e_{\fp}(M/\Q)$ and $\deg(\fp)$ are less than $2$.

% So to prove the theorem, it suffices to determine the number of primes of $M$ above $p$ having degree $1$. 
		
(1) By Lemma~\ref{lem:p0}, the assumption in this case is equivalent to that $\fp_K \cap \calO$ is principal. Then $\fp_K$ splits completely in $L$ by \eqref{eq:artin}. It follows that for each $\fP$, we have  $\# D_{\fP} = e_p(L/\Q)f_p(L/\Q)=e_{p}(K/\Q) f_{p}(K/\Q)=2$. This implies 
 \[\begin{split} &e_{\fp}(M/\Q)\deg(\fp)\leq 2;\\ &e_{p}(F^{+}/\Q)f_p(F^{+}/\Q)\leq 2. 
 	\end{split}    \]
We divide the set of primes  above $p$ in $M$ into two subsets 
\begin{gather*}
X_1=\{\fp\mid e_{\fp}(M/\Q)\deg(\fp)=1 \},\\
X_2=\{\fp\mid e_{\fp}(M/\Q)\deg(\fp)=2 \}.
\end{gather*}
 Note that
 \begin{enumerate}[a]
 	\item For $\fp\in X_1$, $e_{\fp}(M/\Q)=\deg(\fp)=1$.
 	\item If $p$ is inert in $K$, then for $\fp\in X_2$,  $e_{\fp}(M/\Q)=1$ and $\deg(\fp)=2$. Indeed in this case, 
 	$e_p(L/\Q)=1$ and hence  $e_{\fp}(M/\Q)=1$ for all $\fp$.
 	\item If $p$ is ramified in $K$, then  for $\fp\in X_2$,   $\deg(\fp)=1$ and $e_{\fp}(M/\Q)=2$. Indeed in this case, $e_p(L/\Q)=2$, $f_p(L/\Q)=1$ and hence $\deg(\fp)=1$ all $\fp$. 
 \end{enumerate}
Let $t=|X_1|$  and  $g=|X_1|+|X_2|$ be the number of primes above $p$ in $M$. Then by \eqref{eq:efg}, $t+2(g-t)=h$ whence $g=\frac{h+t}{2}$. The proof of Theorem~\ref{them:prime factorization}(A)(1) is reduced to determining $t$.

Since $\Gal(L/M)=\langle \tau \rangle$, there is a bijection
\begin{equation}\label{eq:ef_to_D1}
	\{ \fP \mid D_{\fP}=\langle \tau \rangle \} \xleftrightarrow{1-1}{} X_1, \quad \fP \mapsto \fp=\fP \cap M.
\end{equation}
Note that $D_{\fP}(L/K)= D_\fP\cap \Gal(L/K) = \{1\}$, as $\fp_K$ splits completely in $L/K$. Thus from \eqref{eq:groupG},  $D_\fP=\langle \sigma \tau \rangle$ for some $\sigma \in \cl(\calO)$.

Suppose first that $p$ splits completely in $F^{+}$, i.e. $e_p(F^{+}/\Q)f_p(F^{+}/\Q)=1$. Then $D_\fP \subset \Gal(L/F^{+}) =\cl(\calO)^2\rtimes \langle \tau \rangle$ whence $D_\fP = \langle \sigma^2 \tau\rangle$ with $\sigma \in \cl(\calO)$. Then $D_{\fP'}=\langle \tau \rangle$ where $\fP'=\sigma^{-1}(\fP)$. It follows from Lemma~\ref{lem:central}(1) that $t=2^{\mu-1}$. 

 %This completes the proof of Theorem~\ref{them:prime factorization}(A)(1) when $e_p(F^{+}/\Q)f_p(F^{+}/\Q)=1$. 
	
Suppose next $e_p(F^{+}/\Q)f_p(F^{+}/\Q)=2$. Then $D_\fP \not \subset \Gal(L/F^{+})$ whence $D_\fP=\langle \sigma\tau \rangle$ for some $\sigma\in \cl(\calO)\setminus \cl(\calO)^2$. So $D_\fP$ is not conjugate to $\langle \tau \rangle$ by Lemma~\ref{lem:G_L}. This implies that the sets in \eqref{eq:ef_to_D1} are both empty and hence $t=0$. Finally, note that when $ D \in \{-p,-2p,-4p\}$
the value of $t$ is given by Proposition~\ref{prop:genus_field1}.	
	
\bigskip
	
(2) The assumption in this case is equivalent to that $\fp_K \cap \calO$ is not principal. 
Then by \eqref{eq:clgp_iso} $\fp_K\notin P_{K,\Z}(f)$ but $\fp^2_K=(p) \in P_{K,\Z}(f)$. By the isomorphism \eqref{eq:artin}, we have $e_{\fp_K}(L/K)=1$ and $f_{\fp_K}(L/K)=2$. Then 
 \begin{equation}\label{eq:ef2}
e_{p}(L/\Q) = e_{\fp_K}(K/\Q)=2, \quad\quad  f_{p}(L/\Q)=f_{\fp_K}(L/K)=2.
 \end{equation}
 In particular for each $\fp$, we have $\deg(\fp)\leq 2$, $e_{\fp}(M/\Q)\leq 2$. Moreover, there is no $\fp$ with $e_{\fp}(M/\Q) = \deg(\fp)=1$; otherwise we would have $e_{\fp}(L/M)f_{\fp}(L/M)=e_{p}(L/\Q)f_{p}(L/\Q)=4$ but this is impossible as $[L:M]=2$. So we can divide the set of primes above $p$ in $M$ into three subsets: 
 \begin{gather*}
 X_1=\{\fp\mid e_{\fp}(M/\Q)=1, \deg(\fp)=2 \}, \\
X_2=\{\fp\mid e_{\fp}(M/\Q)=2, \deg(\fp)=1 \},\\
X_3=\{\fp\mid e_{\fp}(M/\Q) = 2, \deg(\fp)=2 \}.
\end{gather*} 
Let $s=|X_1|$, $t=|X_2|$ and $g=|X_1|+|X_2|+|X_3|$. Then by \eqref{eq:efg}, $2s+2t+4(g-t-s)=h$ whence $g=\frac{h+2s+2t}{4}$. Thus the proof of Theorem~\ref{them:prime factorization}(A)(2) is reduced to determining $s$ and $t$. 

Since $e_\fp(L/M)=2$ and $f_\fp(L/M)=1$ for $\fp \in X_1$, there is a bijection
\begin{equation}\label{eq:f2e1}
\{ \fP | \text{ } I_{\fP}=\langle \tau \rangle \} \xleftrightarrow{1-1}{} X_1, \quad \fP \mapsto \fp=\fP \cap M,
\end{equation} 
Since $e_\fp(L/M)=1$ and $f_\fp(L/M)=2$ for $\fp \in X_2$, there is a bijection
\begin{equation}\label{eq:f1e2}
\{ \fP | \text{ } I_{\fP}\neq \langle \tau \rangle, D_\fP\supset \langle \tau \rangle \} \xleftrightarrow{1-1}{} X_2, \quad \fP \mapsto \fp=\fP \cap M.
\end{equation} 
For each $\fP$, by \eqref{eq:ef2} we have
	\begin{equation}\label{eq:dec_inertia}
\# I_\fP=2,  \quad  \# D_\fP(L/K)=2, \quad \# D_\fP =\#( I_\fP \cdot D_\fP(L/K))=4. 
\end{equation}	
The last equality is because $D_\fP(L/K)\subset \cl(\calO)$ but $I_\fP\not\subset \cl(\calO)$ as $\fP$ is unramified in $L/K$.

\medskip

\noindent (I) The case $e_p(F^{+}/\Q)=1$.

\medskip
Suppose first $p$ splits completely in $F^{+}$, i.e.  $f_p(F^{+}/\Q)=1$. Then 
\begin{equation}\label{eq:ID1}
I_\fP\subset D_\fP \subset \Gal(L/F^{+}) =\cl(\calO)^2\rtimes \langle \tau\rangle.
\end{equation}
Since $I_\fP\not\subset \cl(\calO)$, it follows that  $I_\fP =\{ 1, \sigma^2_0 \tau\}$ ($\sigma_0 \in \cl(\calO)$) is conjugate to $\langle \tau\rangle$. Thus there exists some $\fP'$ such that $I_{\fP'}=\langle \tau \rangle$.  Then we obtain $s=2^{\mu}/4=2^{\mu-2}$ Lemma~\ref{lem:central}(1). Now by \eqref{eq:ID1}, $D_{\fP'}(L/K) = \{1,\sigma^2\} $  for some $\sigma \in \cl(\calO)$ with $\sigma^2\in \cl(\calO)[2]$. Therefore, $D_{\fP'}=\{ 1, \tau, \sigma^2, \sigma^2\tau\}$ with $I_{\fP'}=\langle\tau\rangle$. Then
\[ I_{\sigma^{-1}(\fP')} = \langle \sigma^2 \tau \rangle \neq \langle \tau \rangle \quad \text{ and } \quad D_{\sigma^{-1}(\fP')} \supset \langle \tau \rangle.\]
Applying Lemma~\ref{lem:central}(3) with respect to $\sigma^{-1}(\fP')$ gives $t=2^{\mu} /4= 2^{\mu-2}$.

Suppose next $p$ does not split completely in $F^{+}$, i.e. $f_p(F^{+}/\Q)=2$. Then $I_\fP\subset \Gal(L/F^{+})=\cl(\calO)^2 \rtimes \langle \tau\rangle$. By a similar discussion as in the previous case, there is some $\fP'$ such that  $I_{\fP'}=\langle \tau\rangle $. Hence by Lemma~\ref{lem:central}(2), we have $s=2^{\mu} /4 = 2^{\mu-2}$. By the assumption, $f_p(L/F^{+})=1$ and hence $D_\fP\not\subset \Gal(L/F^{+})$. In particular,  $D_\fP(L/K)=D_\fP \cap \Gal(L/K) 
 \not\subset \cl(\calO)^2 = \Gal(L/F^{+}) \cap \Gal(L/K)$. So $D_{\fP'}(L/K)=  \{1,\delta\}$ for some $\delta\notin \cl(\calO)\setminus \cl(\calO)^2.$ Then $D_{\fP'} = \{1,\tau, \delta, \delta\tau\}$ with $I_{\fP'}=\langle \tau\rangle$. Both $\delta$ and $\delta\tau$ are not conjugate to $\tau$ by Lemma~\ref{lem:G_L}. Since any $D_\fP$ is conjugate to $D_{\fP'}$, it follows that the sets in \eqref{eq:f1e2} are both empty whence $t=0$. 
 
\medskip
	
\noindent (II) The case  $e_p(F^{+}/\Q) =2$.	

\medskip 	
In this case we have $e_{\fp}(M/\Q)=2$ and $e_\fp(L/M)=1$ as $F^{+}\subset M$. This implies $s=0$. Note that for each $\fP$, $e_\fP(L/M)=1$ since $2=e_p(L/\Q)=e_\fP(L/M)e_{\fP\cap M}(M/\Q)$. Hence $I_\fP\neq\langle \tau\rangle$ for each $\fP$.

Now if $f_{p}(F/F^{+})=2$, then $f_\fP(L/F^{+})=2$ whence $D_\fP(L/F^{+}) = D_\fP \cap (\cl(\calO)^2 \rtimes \langle \tau\rangle) = \langle \sigma^2 \tau \rangle $ ($\sigma \in \cl(\calO)$) has order $2$.  So $D_{\fP'}(L/M) = \langle \tau \rangle$ and $\fP'$ is inert in $L/M$ where $\fP'=\sigma^{-1}(\fP)$. In a word, $I_{\fP'}\neq \langle \tau \rangle$ and $D_{\fP'}\supset \langle \tau \rangle$.
By Lemma~\ref{lem:central}(3), $t= 2^{\mu}/4=2^{\mu-2}$. 

If $f_p(F/F^{+})=1$, then $e_p(F/F^{+})=f_p(F/F^{+})=1$ which implies $D_p(F/F^{+})=\{1\}$. In general, we have an exact sequence of decomposition groups:
\[
1\to D_\fP(L/F) \to D_\fP(L/F^{+}) \to D_{\fP\cap F}(F/F^{+}) \to 1.
\]
It follows that $D_\fP(L/F^{+}) = D_\fP(L/F)$. Then $D_\fP(L/M) = D_\fP(L/F^{+}) \cap \Gal(L/M)\subset \Gal(L/F)\cap \Gal(L/M)=\{1\}$. This implies that for each $\fp$, $e_\fp(L/M)=f_\fp(L/M)=1$ whence $e_\fp(M/\Q)=f_\fp(M/\Q)=2$. Thus we have $t=0$. 
  \end{proof}

\begin{remark}
	The number of degree $1$ primes in the first part of Theorem~\ref{them:prime factorization}(A-1) for $p>3$ and $D>-\frac{4}{\sqrt{3}}\sqrt{p}$ was the result of  Xiao-Luo-Deng~\cite[Theorem 5]{GLY21}, however one can see from their proof that the bound for $D$ is not needed.
\end{remark}

In the following  we describe a special case of Theorem~\ref{them:prime factorization}.

\begin{corollary}~\label{coro:O(q)}
	Let $q$ be an odd prime, $K=\Q(\sqrt{-q})$,  $D\in\{-q,-4q\}$ and $h=h_D$. Suppose $\big(\frac{-q}{p}\big)\neq 1$.
	\begin{enumerate}[1]
		\item If $p=q$, then \[p\calO_M=\begin{cases}
		\fp_1\fp^2_1\cdots\fp^2_{\frac{h+1}{2}}, &\text{if}\ q\equiv 3 \bmod{4};\\
		\fp^2_1\cdots\fp^2_{\frac{h}{2}}, &\text{if}\ q\equiv 1 \bmod{4}.
		\end{cases}\] where all prime ideals $\fp_i$ are of degree 1.
		\item If $\big(\frac{-q}{p}\big)= -1$, then 
		\[p\calO_M = \fp_1\fp_2\cdots\fp_g\]
		with all $\fp_i$ having degree $1$ or $2$; moreover,
		\begin{enumerate}[i]
			\item if  $ q \equiv 3 (\bmod 4)$, then $g=\frac{h+1}{2}$ and
			there is precisely one prime of degree $1$;
			\item if  $ q \equiv 1 (\bmod 4)$, then $g=\frac{h+1+\big(\frac{q}{p}\big)}{2}$ and 
			there are precisely $1+\big(\frac{q}{p}\big)$ primes of degree $1$.	\end{enumerate}
	\end{enumerate}
	 \end{corollary}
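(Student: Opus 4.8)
The plan is to obtain the corollary as a direct specialization of Theorem~\ref{them:prime factorization}; the only genuine computation is to determine the genus invariant $\mu$ and the real genus field $F^{+}$ for the two discriminants $D\in\{-q,-4q\}$, after which both parts are read off from the theorem together with a single Legendre symbol evaluation.

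First I would record the genus data. Since $q$ is an odd prime, $D$ has exactly one odd prime factor. If $q\equiv 3\bmod 4$, then both $-q$ and $-4q$ lie in the exceptional list of Proposition~\ref{prop:genus_field1}(1), so $\mu=1$ and $F^{+}=\Q$. If $q\equiv 1\bmod 4$, then $-q\equiv 3\bmod 4$ is not a discriminant, so necessarily $D=-4q$, and the formulas for $\vp_0,\vp_1$ preceding Proposition~\ref{prop:genus_field1} give $\vp_0=1$, $\vp_1=q$, whence $\mu=2$ and $F^{+}=\Q(\sqrt{q})$. I would also note the elementary identity $\leg{-4q}{p}=\leg{-q}{p}$ for odd $p$, so that the standing hypothesis $\leg{-q}{p}\neq 1$ coincides with $\leg{D_K}{p}\neq 1$, i.e.\ with ``$p$ does not split in $K$'', uniformly for both choices of $D$; in particular $\leg{-q}{p}=-1$ forces $p$ inert in $K$ and $p\nmid D_K$.

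For part (1), $p=q$ ramifies in $K$ and is coprime to $f\in\{1,2\}$, and $D=-q$ or $-4q$ lies in $\{-p,-2p,-4p\}$, so the corresponding subcase of Theorem~\ref{them:prime factorization}(A)(1) applies directly. Its dichotomy according to whether $p\equiv 1\bmod 4$ becomes a dichotomy in $q\bmod 4$: when $q\equiv 1\bmod 4$ only $D=-4q$ occurs and $p=q\equiv 1\bmod 4$, giving $p\calO_M=\fp_1^2\cdots\fp_{h/2}^2$; when $q\equiv 3\bmod 4$ we have $p\equiv 3\bmod 4$, giving $p\calO_M=\fp_1\fp_2^2\cdots\fp_{(h+1)/2}^2$; in both cases all factors have degree $1$. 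For part (2), $\leg{-q}{p}=-1$ puts us in the first scenario of Theorem~\ref{them:prime factorization}(A)(1) (assuming $p\nmid f$), so $p\calO_M=\fp_1\cdots\fp_g$ with $g=(h+t)/2$, where $t=2^{\mu-1}$ if $p$ splits completely in $F^{+}$ and $t=0$ otherwise. If $q\equiv 3\bmod 4$ then $F^{+}=\Q$, so $p$ splits completely automatically, $t=2^{0}=1$, and $g=(h+1)/2$ with a unique degree-$1$ prime. If $q\equiv 1\bmod 4$ then $F^{+}=\Q(\sqrt{q})$, and Lemma~\ref{lem:p0}(1) says $p$ splits completely in $F^{+}$ exactly when $\leg{q}{p}=1$; hence $t=2$ when $\leg{q}{p}=1$ and $t=0$ when $\leg{q}{p}=-1$, which I would package as $t=1+\leg{q}{p}$ and $g=\tfrac{1}{2}\bigl(h+1+\leg{q}{p}\bigr)$.

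The genus bookkeeping and the Legendre translations are routine; the one point requiring care, and the main obstacle, is the conductor. For $D=-4q$ with $q\equiv 3\bmod 4$ one has $f=2$, so the prime $p=2$, which is inert in $K$ precisely when $q\equiv 3\bmod 8$, divides $f$ and is governed by Theorem~\ref{them:prime factorization}(B) (with $\calO^{(2)}=\calO_K$ and a totally ramified extension $M/M^{(2)}$ of degree $h_D^{(2)}=3$ by Proposition~\ref{prop:totrami}) rather than by case (A), so the squarefree conclusion of part (2) fails there. I would therefore either restrict part (2) to $p\nmid 2f$ or dispose of this single prime $p=2$ separately; note that for the two fundamental discriminants ($D=-q$ with $q\equiv 3\bmod 4$, and $D=-4q$ with $q\equiv 1\bmod 4$) one has $f=1$ and no such exception arises.
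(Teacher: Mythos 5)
Your proof follows essentially the same route as the paper's: the paper also just records $F^{+}=\Q$, $\mu=1$ for $q\equiv 3\bmod 4$ and $F^{+}=\Q(\sqrt{q})$, $\mu=2$ for $q\equiv 1\bmod 4$, and then cites Theorem~\ref{them:prime factorization}(A)(1), with your bookkeeping $t=1+\leg{q}{p}$ being exactly the intended specialization via Lemma~\ref{lem:p0}(1). Your closing caveat about the conductor is a genuine refinement rather than a detour: the paper's one-line proof silently invokes case (A), which requires $p\nmid f$, yet for $D=-4q$ with $q\equiv 3\bmod 8$ the prime $p=2$ satisfies the hypothesis $\leg{-q}{2}=-1$ while dividing $f=2$, and by Theorem~\ref{them:prime factorization}(B) together with \eqref{eq:classno2} the extension $M/M^{(2)}$ is totally ramified of degree $h^{(2)}_D=3$ above $2$ whenever $q>3$, so $2\calO_M$ is not squarefree and part (2) as stated fails for that prime. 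One small correction to your remark: for $q=3$ the unit index $[(\calO^{(2)})^{\times}:\calO^{\times}]=3$ forces $h^{(2)}_D=1$, so $D=-12$ is not actually a counterexample; with that adjustment, your proposed repair (restrict part (2) to $p\nmid 2f$, or treat $p=2$ separately) is the right fix.
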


\begin{proof}
	 In this situation, we have $F^{+}= \Q$ and $\mu=1$ if $q\equiv 3\bmod 4$, and $F^{+}=\Q(\sqrt{q})$ and $\mu=2$ if $q\equiv 1\bmod 4$. Corollary~\ref{coro:O(q)} then follows from Theorem~\ref{them:prime factorization}(1) directly.
\end{proof}

For completeness, we describe the simpler case that $p$ splits in $K$ to end this section.
\begin{proposition} 
Let the notation be as in Theorem~\ref{them:prime factorization}. Assume that $p$ splits in $K$. Let $\lambda$ be the order of any prime $\fp_K$ in $K$ above $p$ in $\cl(\calO^{(p)})$. Then
 \[p\calO_M = (\fp_1\cdots \fp_g)^{h_D^{(p)}} \] 
where $\deg(\fp_1)=\cdots=\deg(\fp_g)=\lambda$,  $g = h_{D^{(p)}}/\lambda$, and $h_D^{(p)}=h_D/h_{D^{(p)}}=(M:M^{(p)})$.
\end{proposition}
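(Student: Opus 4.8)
The plan is to reduce everything to the prime-to-$p$ order $\calO^{(p)}=\Z+f^{(p)}\calO_K$, whose ring class field $L^{(p)}$ is unramified above $p$ (since $p\nmid f^{(p)}$), and then to transport the answer up to $M$ through the totally ramified layer $M/M^{(p)}$ provided by Proposition~\ref{prop:totrami}. Concretely, I would first show that
\[
p\calO_{M^{(p)}}=\fq_1\cdots\fq_g,\qquad g=h_{D^{(p)}}/\lambda,
\]
with every $\fq_i$ unramified of degree $\lambda$. Then, since $M/M^{(p)}$ is totally ramified of degree $h_D^{(p)}=[M:M^{(p)}]$ at every prime above $p$, each $\fq_i$ extends to a single prime $\fp_i$ of $M$ with $\fq_i\calO_M=\fp_i^{\,h_D^{(p)}}$ and $\deg\fp_i=\deg\fq_i=\lambda$; multiplying out gives $p\calO_M=(\fp_1\cdots\fp_g)^{h_D^{(p)}}$. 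This uniformly covers the case $p\nmid f$, where $\calO^{(p)}=\calO$, $M^{(p)}=M$ and $h_D^{(p)}=1$.

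For the factorization in $M^{(p)}$ I would work inside $\Gal(L^{(p)}/\Q)=\cl(\calO^{(p)})\rtimes\langle\tau\rangle$ (the analogue of \eqref{eq:groupG}) and track decomposition groups. Write $p\calO_K=\fp_K\fp'_K$ with $\fp_K,\fp'_K$ of degree $1$ over $\Q$. As $p\nmid f^{(p)}$, $\fp_K$ is unramified in $L^{(p)}/K$, and under the Artin isomorphism $\Gal(L^{(p)}/K)\cong\cl(\calO^{(p)})$ its Frobenius is the class $[\fp_K\cap\calO^{(p)}]$ of order $\lambda$; hence $\fp_K$ has inertia degree $\lambda$ and splits into $h_{D^{(p)}}/\lambda$ primes of $L^{(p)}$, each unramified of degree $\lambda$ over $\Q$. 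The crucial point, which is exactly where the splitting of $p$ enters, is that for a prime $\fP$ of $L^{(p)}$ above $p$ the Frobenius $\mathrm{Frob}_\fP(L^{(p)}/\Q)$ restricts to $\mathrm{Frob}_{\fp_K}(K/\Q)=\mathrm{id}$ because $\fp_K$ has degree $1$; therefore $D_\fP(L^{(p)}/\Q)=\langle\mathrm{Frob}_\fP\rangle\subseteq\cl(\calO^{(p)})$.

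Since $\cl(\calO^{(p)})$ is abelian, every conjugate of this decomposition group again lies in $\cl(\calO^{(p)})$ (conjugation by $\tau$ merely inverts the generator), so for every prime $\fP$ above $p$ one gets $D_\fP(L^{(p)}/M^{(p)})=D_\fP(L^{(p)}/\Q)\cap\langle\tau\rangle=\{1\}$. Thus each prime $\fq$ of $M^{(p)}$ above $p$ splits in $L^{(p)}/M^{(p)}$ with $e_\fq(M^{(p)}/\Q)=1$ and $\deg\fq=\lambda$; equivalently the $h_{D^{(p)}}/\lambda$ primes of $L^{(p)}$ over $\fp_K$ are paired with those over $\fp'_K$ by $\tau$, each pair descending to one $\fq$. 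The identity $\sum_{\fq\mid p}e_\fq\deg\fq=[M^{(p)}:\Q]=h_{D^{(p)}}$ then forces $g=h_{D^{(p)}}/\lambda$, completing the description of $p\calO_{M^{(p)}}$.

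The main obstacle—indeed the only subtle step—is justifying that the decomposition groups of the primes above $p$ sit entirely inside $\cl(\calO^{(p)})$ and do not meet the reflection coset $\cl(\calO^{(p)})\tau$; this is precisely the place where the hypothesis that $p$ splits in $K$ is used, via the degree-$1$ primes $\fp_K,\fp'_K$ together with the fact that an unramified Frobenius generates the whole decomposition group. Once this is secured, the generalized-dihedral bookkeeping and the transport through the totally ramified extension $M/M^{(p)}$ (Proposition~\ref{prop:totrami}) are routine.
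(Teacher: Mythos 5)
Your proof is correct and follows essentially the same route as the paper: reduce to the case $p\nmid f$ via the totally ramified layer $M/M^{(p)}$ of Proposition~\ref{prop:totrami}, then show every prime $\fp$ of $M$ above $p$ splits in $L/M$ with residue degree $\lambda=f_\fP(L/K)$ given by the order of $[\fp_K\cap\calO]$ under the Artin isomorphism. The only cosmetic difference is that you verify the splitting of $\fp$ in $L/M$ by explicit Frobenius/decomposition-group bookkeeping in $\cl(\calO)\rtimes\langle\tau\rangle$, where the paper condenses the same fact into the observation that $p$ splits in $K$ and $L=MK$.
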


\begin{proof}
It suffices to prove  the case $p\nmid f$ by Proposition~\ref{prop:totrami}. In this case, $\calO^{(p)}=\calO$ and $M^{(p)}=M$, and $p$ is unramified in $L$. 
 Since $p$ splits in $K/\Q$ and $L=MK$, it follows that
$\fp$ splits in $L/M$ for any prime $\fp$ of $M$ above $p$. Hence $\deg(\fp)=\deg(\fP)$ where $\fP$ is any prime of $L$ above $\fp$. 
We have $\deg(\fP) = f_{\fP}(L/K)=\lambda$ by the isomorphism \eqref{eq:clgp_iso} and by the obvious fact that $\lambda$ is independent of the choice of $\fp_K$. This proves the case $p\nmid f$.  
\end{proof}

%\begin{remark}\label{rmk:split_fact}
%Suppose $p$ splits in $K$. Then the factorization of $p\calO_M$ can be described as follows.
%We first assume $p\nmid f$. It follows that if $\fp$ is a prime of $M$ above $p$, then $\fp$ splits in $L/M$ as $L=MK$. In particular, $\deg(\fp) = \deg(\fP)$ where, as usual, $\fP$ is a prime of $L$ above $\fp$. Since $p$ splits in $K$, we have $\deg(\fP) = f_\fP(L/K)$ and $f_\fP(L/K)$ is equal to the order $\lambda$ of $\fp_K$ in $\cl(\calO)$ by the Artin isomorphsim \eqref{eq:clgp_iso}. Therefore, we have $p\calO_M = \fp_1\cdots \fp_g$ with $\deg(\fp_1)=\cdots =\deg(\fp_g) = \lambda$ and $g = h/\lambda$. 
%As in Theorem~\ref{them:prime factorization}(B), if $p\mid f$ the decomposition of $p\calO_M$ follows from Proposition~\ref{prop:totrami}. 
%\end{remark}

%%%%%%%%%%%%%%%%%%%%%%%%%%%%%%%%%%%%%%%%%%%%%%%%%%%%%%%%%%%%%%%%%%%%%%%%%%%%%%%%%%%%%%%%%%%%%%%%%%%%%%%%%%%%%
\section{Factorization of the Hilbert class polynomial $H_D(x)$ over $\F_p[x]$}
%%%%%%%%%%%%%%%%%%%%%%%%%%%%%%%%%%%%%%%%%%%%%%%%%%%%%%%%%%%%%%%%%%%%%%%%%%%%%%%%%%%%%%%%%%%%%%%%%%%%%%%%%%%%%
In this section, we shall study the factorization of $H_D(x)$ modulo primes $p$. 
We continue to use the notations defined in \S~2.  We also denote $\overline{H}_D(x)= H_D(x)\bmod{p}\in \F_p[x]$, and denote
		\begin{equation}
			 n_D=[\calO_M: \Z[j_D]],\qquad  
		i_p=v_p(n_D)	
	\end{equation}
where $v_p$ means the $p$-adic valuation. Since $N_{M/Q}(H_D'(j_D))=\disc H_D(x)=n_D^2D_M$, we have
\begin{equation}\label{eq:disc H_D}
	v_p\left(N_{M/Q}\big(H_D'(j_D)\big)\right)=2i_p+v_p(D_M).
\end{equation}

\subsection{The case $p\nmid  n_D $.}  In this case, according to \cite[Theorem 4.8.13]{Coh96}, the factorization of $H_D(x)$ over $\F_p[x]$ is determined by the factorization of $p\calO_M$. %In order to reflect the theme of this article, we write down it here.

\begin{theorem}~\label{them:facto. of class [polynomial]}  Fix $D,\ \calO$ and hence the fields $K, L, M, F$ and $F^{+}$. Assume $p\nmid  n_D f$.
	
	$(A)$ Suppose $p$ dose not split in $K$.
	\begin{enumerate}[1]
	
		\item If $p\nmid D_K$, then
		 \[\overline{H}_D(x) = \begin{cases}
		(x-j_1)\cdots(x-j_{2^{\mu-1}})p_1(x)\cdots p_{\frac{h-2^{\mu-1}}{2}}(x), &\text{if}\ p\ \text{splits completely  in}\ F^{+};\\
		p_1(x)\cdots p_{\frac{h}{2}}(x), &\text{otherwise}.
		\end{cases}\]

		\item If $D\in \{-p,-2p,-4p\}$, then 
		\[\overline{H}_D(x)=\begin{cases}
		(x-j_{1})(x-j_2)^{2}\cdots (x-j_{\frac{h+1}{2}})^{2}, &\text{if}\ D\in \{-4, -8, -p, -4p\mid p\equiv 3\bmod{4}\};\\
		(x-j_1)^{2}\cdots (x-j_{\frac{h}{2}})^{2}, &\text{if}\ D=-4p\ \text{and}\ p\equiv1 \bmod 4.
		\end{cases}\]

\item If $p\mid D_K$, $D\notin \{-p,-2p,-4p\}$ and $p$ is unramified in $F^{+}$, then 
			\[\overline{H}_D(x) = \begin{cases}
			p_1(x)\cdots p_{2^{\mu-2}}(x)(x-j_{1})^{2}\cdots(x-j_{2^{\mu-2}})^{2}p_{2^{\mu}+1}(x)^{2}\cdots p_{\frac{h+3\cdot2^{\mu}}{4}}(x)^{2}, &\text{if}\ f_p(F^{+}/\Q)=1;\\
			p_1(x)\cdots p_{2^{\mu-2}}(x)p_{2^{\mu-2}+1}(x)^{2}\cdots p_{\frac{h+2^{\mu-1}}{4}}(x)^{2}, &\text{otherwise}.
			\end{cases}\]
\item If $p\mid D_K$, $D\notin \{-p,-2p,-4p\}$ and $p$ is ramified in $F^{+}$, then 
\[\overline{H}_D(x) = \begin{cases}
(x-j_{1})^{2}\cdots(x-j_{2^{\mu-2}})^{2}p_1(x)^{2}\cdots p_{\frac{h-2^{\mu-1}}{4}}(x)^{2}, &\text{if}\ f_p(F/F^{+})=1;\\
p_1(x)^{2}\cdots p_{\frac{h}{4}}(x)^{2}, &\text{otherwise}.
\end{cases}\]
	\end{enumerate}
In each case above,  all $j_*$ are distinct elements in $\F_p$, and all $p_*(x)$ are distinct monic irreducible polynomials of degree $2$ over $\F_p$.

$(B)$ Suppose $p$ splits in $K$. Let $\lambda$ be the order of any prime $\fp_K$ in $K$ above $p$ in $\cl(\calO)$. Then 	\[\overline{H}_D(x)=f_1(x)\cdots f_{h/\lambda}(x),\] where all $f_*(x)$ are distinct monic irreducible polynomials of degree $\lambda$ over $\F_p$.
\end{theorem}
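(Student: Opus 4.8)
The plan is to reduce the entire statement to the prime ideal factorization of $p\calO_M$, which was already determined in Theorem~\ref{them:prime factorization} and in the final Proposition of \S3, by invoking the classical link between the factorization of a generator's minimal polynomial modulo $p$ and the splitting of $p$ in the ring of integers. The decisive hypothesis is $p\nmid n_D$, where $n_D=[\calO_M:\Z[j_D]]$; this is exactly what is needed to make that link an honest bijection.

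First I would recall that $H_D(x)$ is the minimal polynomial of $j_D$ over $\Q$ and that $M=\Q(j_D)$ has degree $h=h_D$ over $\Q$: indeed $[L:\Q]=2h$ with $L=MK$ and $[L:M]=2$ given by complex conjugation, so $[M:\Q]=h=\deg H_D$, and since $H_D\in\Z[x]$ is monic with $H_D(j_D)=0$ it is the minimal polynomial over $\Q$. Thus $\Z[j_D]\subseteq\calO_M$ is an order of index $n_D$, and since $p\nmid n_D$ the Kummer--Dedekind theorem in the form of \cite[Theorem 4.8.13]{Coh96} applies: the factorizations $\overline{H}_D(x)=\prod_i \overline{g}_i(x)^{e_i}$ into distinct monic irreducibles $\overline{g}_i\in\F_p[x]$ of degrees $d_i$ and $p\calO_M=\prod_i \fp_i^{e_i}$ into distinct primes correspond bijectively, with $e_{\fp_i}(M/\Q)=e_i$ and $\deg(\fp_i)=d_i$. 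In particular, reading this backwards, a prime $\fp\mid p$ of degree $1$ (residue field $\F_p$) contributes a linear factor $x-j_*$ with $j_*\in\F_p$, a prime of degree $2$ contributes a monic irreducible quadratic over $\F_p$, the multiplicity of each factor equals the ramification index $e_\fp(M/\Q)$, and distinctness of the $\fp_i$ forces distinctness of the corresponding factors.

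With this dictionary the theorem becomes a transcription. Since $p\nmid f$ as well, I would feed in directly the factorization types from Theorem~\ref{them:prime factorization}(A) when $p$ does not split in $K$, and from the final Proposition of \S3 when $p$ splits. For part (A), each of the four displayed cases mirrors the corresponding case of Theorem~\ref{them:prime factorization}(A): in (1) the primes are unramified of degree $1$ or $2$, with the number of degree-$1$ primes $t=2^{\mu-1}$ or $0$ taken from Lemma~\ref{lem:p0}(1); in (2) the primes for $D\in\{-p,-2p,-4p\}$ are totally ramified of degree $1$, giving squared linear factors; in (3) and (4) I read off $s,t,g$ from Theorem~\ref{them:prime factorization}(A)(2), translating $e_\fp=2$ into the square exponents and the recorded degree-$1$ count into squared linear factors. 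For part (B), $p\nmid f$ yields $\calO^{(p)}=\calO$, $M^{(p)}=M$ and $h_D^{(p)}=1$, so the Proposition gives $p\calO_M=\fp_1\cdots\fp_{h/\lambda}$ with every $\deg(\fp_i)=\lambda$, transcribing to a product of $h/\lambda$ distinct irreducible factors of degree $\lambda$.

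The work here is essentially bookkeeping: no mathematics beyond the Kummer--Dedekind correspondence is required, since the genuinely analytic content --- locating degree-$1$ versus degree-$2$ primes and their ramification via the genus-field computations of Lemma~\ref{lem:p0} --- was already carried out for Theorem~\ref{them:prime factorization}. The only point demanding care is the arithmetic matching of the index ranges (the appearances of $2^{\mu-1},2^{\mu-2},2^{\mu-3}$ and the counts $\tfrac{h\pm 2^{\mu}}{4}$) between the ideal-theoretic quantities $s,t,g$ and the exponent patterns in the displayed polynomials; I would verify each substitution case by case, and would pay particular attention to keeping the two subcases of (4) (distinguished by $f_p(F/F^{+})$) aligned with the values $t=0$ and $t=2^{\mu-2}$ from Theorem~\ref{them:prime factorization}(A)(2)(ii), but I expect no conceptual obstacle.
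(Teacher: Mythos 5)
Your proposal is correct and is exactly the paper's argument: the paper proves this theorem precisely by invoking the Kummer--Dedekind correspondence of \cite[Theorem 4.8.13]{Coh96}, legitimate since $p\nmid n_D=[\calO_M:\Z[j_D]]$, and then transcribing the prime factorization of $p\calO_M$ from Theorem~\ref{them:prime factorization} (and, for the split case with $p\nmid f$, the final Proposition of \S 3 with $h_D^{(p)}=1$) into factor degrees and multiplicities. Your bookkeeping of the counts $t$, $s$, $g$ against the displayed index ranges also checks out case by case.
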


\begin{remark} If  $p\nmid n_D$ but $p\mid f$,  by Theorem~\ref{them:prime factorization}(B),  the factorization of $\overline{H}_D(x)$ is just  replacing $h$ and $\mu$ in the above Theorem by $h_{D^{(p)}}$ and $\mu^{(p)}=\log_2(\# (\cl(\calO^{(p)})/\cl(\calO^{(p)})^2))+1$, and then raising every factors in the right hand side to the $h^{(p)}_D$-power.
\end{remark}

\subsection{The case $p\mid  n_D $} In this case,  the factorization of $\overline{H}_D(x)$ over $\F_p[x]$ cannot follow directly from the factorization of $p\calO_M$. In order to saying something about its factorization without knowledge of $H_D(x)$, we need firstly to figure out the multiple factors of $\overline{H}_D(x)$. In this subsection, we assume $p$ does not split in $K$ and $p\nmid f$. 

For a prime ideal $\fP$ above $p$ in $L$, we count the number $m$ of conjugates $j_D'$ of $j_D$(we mean $j_D'\neq j_D$) such that $v_{\fP}(j_D-j_D')>0$. Then $j_D \bmod \fP$ is a $(m+1)$-multiple root of $\overline{H}_D(x)$ in $\F_{p^2}=\calO_L/\fP$. Assume that $E, E'$ are elliptic curves defined over $L$ such that $j(E)=j_D$ and $j(E')=j_D'$. Let $W$ be the completion of the maximal unramified extension of the valuation ring of $v_\fP$, and $\pi$ a uniformizer of $W$, and $\Iso_{W/\pi^n}(E,E')=\{\phi:E\xrightarrow{\sim} E' \bmod \pi^n\}$. The set $\Iso_{W/\pi^n}(E,E')$ is finite of order $0,2,4,6$ for $p\geq 5$.  From~\cite[Proposition 2.3]{Zagier85}, we have 
\begin{equation}\label{eq:v(theta-theta')}
v_\fP(j_D-j_D')=v_{\pi}(j_D-j_D')=\sum\limits_{n\geq 1} i_{\fP}(n)\ \text{and}\ i_{\fP}(n)=\frac{\#\Iso_{W/\pi^n}(E,E')}{2}.
\end{equation}

Suppose $\sigma_{\fb}(j_D)=j_D'$ for some $\sigma_{\fb}\in\cl(\calO)$, i.e., $E'=[\fb]*E$. The set $\Iso_{W/\pi^n}(E,E')$ consists of elements of reduced norm $1$ in $\Hom_{W/{\pi^n}}(E,E')$ which is isomorphic to $\End_{W/{\pi^n}}(E)\fb$ as an $\End_{W/{\pi^n}}(E)$ module in $B_{p,\infty}$. Dorman\cite{Dorman} described $\End_{W/{\pi^n}}(E)$ explicitly for any fundamental discriminant $D$ and showed in \cite[Lemma 4.8 and 4.11 ]{Dorman} that every isomorphism $\phi\in \Iso_{W/\pi^n}(E,E')$ induces two integral invertible ideals $\cc, \fd$ of $\calO$ satisfying:
\begin{equation}~\label{eq:solution}
\begin{split}& N(\cc)+p^{2n-1}N(\fd)=|D|, \ \text{if}\ p\ \text{is\ inert\ in}\ \calO; \\
& N(\cc)+p^{n-1}N(\fd)=|D|, \ \text{if}\ p\ \text{is\ ramified\ in}\ \calO. \end{split}
\end{equation}
Here $\cc,\fd$ are determined by $\fb$ and $\fP$. By counting the solutions of equation ~\eqref{eq:solution} over all $n\geq 1$ and $\fb\in \cl(\calO)\setminus\{1\}$, Dorman finally gave a formula to compute $v_{p}(N_{M/\Q}(H_D'(j_D)))$ whence $i_p$ by \eqref{eq:disc H_D}. Lauter and Viray\cite{lat} generalized  Dorman's description of  $\End_{W/{\pi^n}}(E)$ to an arbitrary discriminant $D$ such that $p\nmid f=[\calO_K:\calO_D]$ by substituting $D_K$ for $D$ and giving concrete definitions of related parameters, but they didn't consider the computation of $i_p$.
By Lauter and Viray's work, the $D$  in \eqref{eq:solution} can take any discriminant. Then
\begin{enumerate}[1]
	\item if $D>-p$, then $v_\fP(j_D-j_D')=0$;
	\item  if $p$ is inert (resp. ramified) in $K$ and $-p^3 < D < -p$ (resp. $-p^2 < D < -p$), then 
	\begin{equation}\label{eq:iP(1)}
	v_{\fP}(j_D - j_D')=i_{\fP}(1)=
	\begin{cases}
	3,\ \text{if}\ j_D \equiv j_D' \equiv 0 \bmod {\fP};\\
	2,\ \text{if}\ j_D \equiv j_D' \equiv 1728 \bmod {\fP};\\
	1,\ \text{if}\ j_D\equiv j_D' \not\equiv 0,1728 \bmod {\fP};\\
	0,\ \text{if}\ j_D\not\equiv j_D' \bmod {\fP}.
	\end{cases}
	\end{equation}

\end{enumerate}
The equation~\eqref{eq:iP(1)} tells us two obvious facts: 1) if $v_{\fP}(j_D-j_D')>0$ and $v_{\fP}(j_D-j_D'')>0$ for two distinct conjugates $j_D', j_D''$, then both valuations are equal to $i_{\fP}(1)$ which is determined only by $j_D\bmod \fP$; 2) if $\sigma\in\Gal(L/K)$ such that $v_{\fP}(\sigma(j_D)-\sigma(j_D'))>0$ for some $j_D'$, then
\begin{equation}\label{eq:i_p=i_sigma(p)}
	v_{\fP}(\sigma(j_D)-\sigma(j_D'))=v_{\sigma^{-1}(\fP)}(j_D-j_D')=i_{\sigma^{-1}(\fP)}(1).
\end{equation}

Now we state the results about the multiple factors of $\overline{H}_D(x)$ over $\F_p[x]$ when $i_p$ is small. 
\begin{theorem}~\label{them:ip<4}
	Assume $p\geq 5$, $D> -p^3$ and $p\nmid D$. If $1\leq i_p=v_p(n_D)\leq 3$, then the multiplicities of irreducible factors of $\overline{H}_D(x)$ are all $\leq 3$. More precisely, the multiple roots of $\overline{H}_D(x)$ can be described as follows:
	\begin{enumerate}[1]
		\item  if $i_p=1$, there is exactly one double root in $\F_{p}\setminus\{0,1728\}$;
		\item  if $i_p=2$, there are either two distinct double roots in $\F_{p^2}\setminus\{0,1728\}$ or one double root $1728$;
		\item if $i_p=3$, then one of the following cases happens:
		\begin{enumerate}[i]
			\item  there are exactly three double roots in $\F_{p^2}\setminus\{0,1728\}$;
			\item  there are exactly two double roots: $1728$ and another in $\F_{p}\setminus\{0,1728\}$;
			\item  there is exactly one double root $0$;
			\item  there is exactly one triple root in $\F_p\setminus\{0,1728\}$.
		\end{enumerate} 
	\end{enumerate}
\end{theorem}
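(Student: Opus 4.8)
The plan is to convert the statement into a single additive identity for valuations at \emph{one} prime of $L$ above $p$ and then solve a small counting problem. Since $p\nmid D=f^2D_K$ and $p$ does not split in $K$, the prime $p$ is inert in $K$, so by Theorem~\ref{them:prime factorization}(A)(1) the ideal $\fp_K$ splits completely in $L$; hence $e_p(L/\Q)=1$, $p$ is unramified in $M$ (indeed in $L$), the residue field at any $\fP\mid p$ is $\F_{p^2}$, and $v_p(D_M)=0$. Therefore \eqref{eq:disc H_D} reduces to $v_p(\disc H_D)=2i_p$. Writing $j_1,\dots,j_h$ for the roots of $H_D$ (the conjugates of $j_D$, all in $L$), using $\disc H_D=\prod_{i<k}(j_i-j_k)^2$, and using that $e_p(L/\Q)=1$ forces $v_\fP=v_p$ on $\Q$, I would deduce that for \emph{every} prime $\fP\mid p$ of $L$,
\[
\sum_{i<k} v_\fP(j_i-j_k)=i_p.
\]
Note that the hypotheses $D>-p^3$ and $i_p\ge1$ place us in the range $-p^3<D<-p$, where \eqref{eq:iP(1)} is available: for $D>-p$ all cross-valuations vanish and $i_p=0$. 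This per-$\fP$ identity (rather than a sum over all $\fP$) is the engine of the argument.

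Next I would reorganize the sum by residues mod $\fP$. Partition $\{j_1,\dots,j_h\}$ into classes according to the value of $j_i\bmod\fP\in\F_{p^2}$; the multiplicity of a root $\bar\beta$ of $\overline{H}_D$ is exactly the size of its class, and an irreducible factor over $\F_p$ has as its multiplicity the common class size of its (one or two) roots. By \eqref{eq:iP(1)}, extended to an arbitrary pair of conjugates via the Galois-equivariance \eqref{eq:i_p=i_sigma(p)}, for two distinct conjugates with common residue $\bar\beta$ the valuation $v_\fP(j_i-j_k)$ equals a constant $w(\bar\beta)\in\{1,2,3\}$, with $w=3$ iff $\bar\beta=0$, $w=2$ iff $\bar\beta=1728$, and $w=1$ otherwise. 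Hence the identity becomes
\[
i_p=\sum_{C}\binom{|C|}{2}\,w_C,
\]
the sum running over residue classes $C$ with $w_C\in\{1,2,3\}$, at most one class having $w_C=3$ (reducing to $0$) and at most one having $w_C=2$ (reducing to $1728$).

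The remaining step is to list all solutions for $i_p\in\{1,2,3\}$. Since $\binom{c}{2}w\le 3$ already forces $c\le 3$, every multiplicity is $\le 3$. A short enumeration then gives: for $i_p=1$ only a single class of type $(|C|,w)=(2,1)$; for $i_p=2$ either two classes of type $(2,1)$ or one of type $(2,2)$; and for $i_p=3$ exactly one of three classes $(2,1)$, the pair $\{(2,2),(2,1)\}$, a single $(2,3)$, or a single $(3,1)$. To pin down the fields of definition I would invoke the Frobenius symmetry of $\overline{H}_D\in\F_p[x]$: the multiplicity of $\alpha$ equals that of $\alpha^p$, so any multiple root in $\F_{p^2}\setminus\F_p$ occurs together with its conjugate. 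Comparing the resulting contribution with the exact value of $i_p$ rules out non-rational roots in the cases with a lone double or triple root (cases (1), (3ii), (3iii), (3iv)), forcing them into $\F_p$ (while $0$ and $1728$ are automatically rational); the generic multiple roots in cases (2) and (3i) may lie in $\F_p$ or form conjugate pairs in $\F_{p^2}$, matching the statement.

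The main obstacle is the valuation input and its \emph{exactness}. The clean identity $\sum_{i<k}v_\fP(j_i-j_k)=i_p$ rests on $p$ being unramified in $L$, so that the rational valuation $v_p(\disc H_D)$ localizes with no ramification factor, and on the values lying in $\{0,1,2,3\}$ depending only on the common residue, which is precisely the content of \eqref{eq:iP(1)} coming from the Dorman and Lauter--Viray descriptions of $\End_{W/\pi^n}(E)$ and of equation \eqref{eq:solution}. The one delicate point is checking that the cross-valuation is constant \emph{within} each class for every pair, not merely for pairs involving the base point $j_D$; this is where the equivariance \eqref{eq:i_p=i_sigma(p)} is needed. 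Once this is secured, the enumeration and the Frobenius argument are routine.
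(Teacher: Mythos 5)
Your proposal is correct and follows essentially the same route as the paper: both reduce the statement to the valuation identity coming from $v_p(\disc H_D)=2i_p$ (you fix a single prime $\fP$ of $L$ and sum $v_\fP(j_i-j_k)$ over pairs of roots to get $i_p=\sum_C\binom{|C|}{2}w_C$, while the paper equivalently fixes $j_D$ and sums $v_\fP(H'_D(j_D))$ over all $\fP\mid p$, each multiple root of multiplicity $c$ and weight $w=i_\fP(1)$ contributing $\tfrac{n^2}{i_\fP(1)}+n=2\binom{c}{2}w$ with $n=(c-1)w$), then both invoke \eqref{eq:iP(1)} together with the equivariance \eqref{eq:i_p=i_sigma(p)} to make the weight depend only on the common residue, and finish by the same small enumeration. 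Your explicit Frobenius-conjugation argument forcing lone double/triple roots into $\F_p$ is precisely the reasoning the paper uses implicitly when it asserts rationality in cases (1), (3ii) and (3iv).
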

\begin{proof}
	Suppose first $\fP$ is a prime ideal in $L$ above $p$.  Since $p\nmid D$, $p$ is unramified in $L$. Thus $v_p(D_M)=0$. By~\eqref{eq:disc H_D},
	\[4i_p=v_p\left(N_{L/\Q}(H'_D(j_D))\right)=v_{\fP}\left(N_{L/\Q}(H'_D(j_D))\right)=2\sum\limits_{\fP\mid p}v_{\fP}(H_D'(j_D)),\]
	where the sum is taken over all prime ideals $\fP$ in $L$ above $p$; i.e.,
	\begin{equation}\label{eq:i_p to v_P}
	2i_p=\sum\limits_{\fP\mid p}v_{\fP}(H_D'(j_D)).
	\end{equation}
	%Consequently, if $i_p\geq 1$, then $H_D(x) \bmod p$ has at least one multiple roots.
	
	Suppose next $n=v_{\fP}\big(H'_D(j_D)\big)\geq 1$. Then there exist $\frac{n}{i_{\fP}(1)}$ conjugates $j_D'$ such that $v_{\fP}(j_D-j_D')=i_{\fP}(1)\geq 1$ by \eqref{eq:iP(1)}. Thus $j_D\bmod \fP$ is a multiple root of $\overline{H}_D(x)$ in $\F_{p^2}$ with multiplicity $\frac{n}{i_{\fP}(1)}+1$. Assume $\sigma\in\Gal(L/\Q)$ such that $j_D'=\sigma^{-1}(j_D)$. Let $\fP'=\sigma(\fP)$. Then 
	\begin{equation}\label{eq:111}
	v_{\fP'}(H'_D(j_D))=v_{\fP}(H'_D(j_D'))=v_{\fP}(H'_D(j_D))=n.
	\end{equation}
	The second equality follows from $v_{\fP}(j_D-\theta)=v_{\fP}(j'_D-\theta)$ for any conjugate $\theta\neq j_D, j'_D$. Thus a multiple root $j_D \bmod \fP$ contributes $\frac{n}{i_{\fP}(1)}+1$ prime ideals in the summation of \eqref{eq:i_p to v_P} which have the same valuation at $H'_D(j_D)$, i.e., the multiple root $j_D\bmod \fP$ contributes a value of $\frac{n^2}{i_{\fP}(1)}+n$ in the right hand of \eqref{eq:i_p to v_P}. Obviously,
	\begin{equation}\label{eq:limitation}
	\frac{n^2}{i_{\fP}(1)}+n\leq 2i_p.
	\end{equation}
	
	Moreover, suppose $j''_{D} \bmod\fP\neq j_D\bmod \fP$ is another root of $\overline{H}_D(x)$ in $\F_{p^2}:=\calO_L/\fP$, and $\sigma''(j''_D)=j_D$ for some $\sigma''\in\Gal(L/K)$. Then $\sigma''(\fP)$ doesn't belong to the previous $\frac{n}{i_{\fP}(1)}+1$ prime ideals set. Furthermore, if $j''_{D} \bmod\fP$ is a simple root of $\overline{H}_D(x)$, then $v_{\sigma''(\fP)}(H'_D(j_D))=v_{\fP}(H'_D(j''_D))=0$; if $j''_{D} \bmod\fP$ is a multiple root of $\overline{H}_D(x)$, assume $n_1=v_{\sigma''(\fP)}(H'_D(j_D))>0$, then $j''_{D} \bmod\fP$ contributes another value of $\frac{n_1^2}{i_{\sigma''(\fP)}(1)}+n_1$ by \eqref{eq:i_p=i_sigma(p)}.
	Consequently, the $h$ prime ideals in the summation of \eqref{eq:i_p to v_P} are partitioned into some disjoint sets according to the different roots of $\overline{H}_D(x)$ in $\F_{p^2}$. Moreover, all the value which derived from different roots of $\overline{H}_D(x)$ add up to $2i_p$.

	(1)	If $i_p=1$, then $n=i_{\fP}(1)=1$  by \eqref{eq:limitation}, and $\overline{H}_D(x)$ has only one double root $j_D \bmod \fP$ and it is in $\F_p\setminus\{0,1728\}$ . 
	
	\medskip	
	(2)	If $i_p=2$, then $n=i_{\fP}(1)=1$ or $n=i_{\fP}(1)=2$ if $j_D\equiv1728\bmod \fP$ by \eqref{eq:limitation}.
	\begin{itemize}
		\item [(i)] In the first case, $j_D \bmod \fP$ contributes the value of 2 in the right hand of \eqref{eq:i_p to v_P}. Thus there exists exactly another double root in $\F_{p^2}\setminus\{0,1728\}$ of $\overline{H}_D(x)$.
		\item [(ii)] In the second case, assume $j_D'=\sigma(j_D)$ such that $j_D'\equiv 1728 \bmod \fP$, then $4=v_{\fP}(H_D'(j_D))+v_{\sigma(\fP)}(H_D'(j_D))$. Thus $\overline{H}_D(x)$ has exactly one double root $1728$.
	\end{itemize}
	
	\medskip	
	(3)	
	If $i_p=3$, then the pair $(n, i_{\fP}(1))$ derived from different multiple roots of $\overline{H}_D(x)$ belongs to the following cases: 1) $n=i_{\fP}(1)=1$; 2) $n=i_{\fP}(1)=2$ if $j_D\equiv1728\bmod \fP$; 3) $n=i_{\fP}(1)=3$ if $j_D\equiv0\bmod \fP$; 4) $n=2, i_{\fP}(1)=1$. Then (3) in Theorem~\ref{them:ip<4} is obvious after some simple discussion as above.
\end{proof}

\begin{remark}
	 In Theorem~\ref{them:ip<4}, only inert primes $p$ in $K$ are considered, as our proof depends on the fact that $v_{p}\left(N_{M/\Q}(H_D'(j_D))\right)=2i_p$ is small. For ramified primes $p$ in $K$,  as $p\nmid f$, by \cite[Proposition 5.1]{Dorman}, $v_{p}\left(N_{M/\Q}(H_D'(j_D))\right)=2i_p+\frac{h-2^{\mu-1}}{2}$ if $p$ is the unique prime factor of $D$ which is $\equiv 3\bmod{4}$, or $2i_p+\frac{h}{2}$ if otherwise, which is more complicated and beyond our consideration.
\end{remark}

%By combining the prime factorization of $p\calO_M$ with the result in Theorem ~\ref{them:ip<3}, we can write down the factorization of $H_D(x)$ modulo $p$.
Now let us move back to the special case that $\calO$ is an imaginary quadratic suborder of the maximal order $\calO(p,q)$ or $\calO'(p,q)$ of $B_{p,\infty}$ of Ibukiyama introduced in \S~1.1. If $-p<D<0$, the factorization of $\overline{H}_{\calO}(x)$ can be derived directly from Corollary~\ref{coro:O(q)} as $p\nmid n_D$ which is the same as in \cite[Theorem 25]{CPV20}. In the following, we give the factorization of $\overline{H}_{\calO}(x)$ over $\F_p[x]$ when $-p^3<D<-p$. For simplicity, we only write down the case  $i_p=v_p(n_D)\leq 2$.

\begin{corollary}
	Let $p, q$ be two primes satisfying $\big(\frac{-q}{p}\big)=-1$ and $q\equiv3\bmod 4$. Let $\calO=\Z[\frac{1+\sqrt{-q}}{2}]$ or $\Z[\sqrt{-q}]$, and the corrsponding discriminant $D=-q$ or $-4q$, respectively. If $-p^3<D<-p$, then
	\[\overline{H}_D(x) = \begin{cases}
	(x-j_0)(x-j_1)^2p_1(x)\cdots p_{\frac{h_D-3}{2}}(x),\ & \text{if}\ i_p=1,\\
	(x-j_0)p_0(x)^2p_1(x)\cdots p_{\frac{h_D-2\deg p_0-1}{2}}(x),\ & \text{if}\ i_p=2,
	\end{cases}\]
	where $j_0\neq j_1\in \F_p$, $p_0(x)\in\F_p[x]$ is either of degree $2$  or $=x-1728$, and all other $p_{*}(x)$ are monic irreducible polynomials of degree $2$ in $\F_{p}[x]$.
\end{corollary}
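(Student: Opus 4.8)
The plan is to combine the prime splitting computed in Corollary~\ref{coro:O(q)} with the multiplicity analysis of Theorem~\ref{them:ip<4}, translating both into a factorization of $\overline{H}_D(x)$. Since $q\equiv 3\bmod 4$ we have $F^{+}=\Q$ and $\mu=1$, so $\cl(\calO)$ has \emph{odd} order $h=h_D$; since $\leg{-q}{p}=-1$ the prime $p$ is inert in $K$, hence $p\nmid D$ and $p$ is unramified in $L$. With $p\geq 5$ and $-p^3<D<-p$, Theorem~\ref{them:ip<4} applies, while Corollary~\ref{coro:O(q)}(2)(i) gives that $p\calO_M$ is squarefree with $g=\tfrac{h+1}{2}$ primes, exactly one of degree $1$. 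The roots of $\overline{H}_D$ lie in $\F_{p^2}$, so the task reduces to counting, among the $h$ roots taken with multiplicity, how many are simple versus multiple and how many lie in $\F_p$ versus $\F_{p^2}\setminus\F_p$.

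First I would set up the bookkeeping by fixing one prime $\fP_0$ of $L$ above $p$ and indexing the roots (with multiplicity) as $r_\sigma=\sigma(j_D)\bmod\fP_0$ for $\sigma\in\cl(\calO)$. Writing the Frobenius at $\fP_0$ as $\tau_{\fP_0}=\sigma_0\tau$ with $\sigma_0\in\cl(\calO)$ (possible because $\fp_K$ splits completely in $L/K$, so $\tau_{\fP_0}\notin\cl(\calO)$), the relation \eqref{eq:groupG} yields $r_\sigma^{p}=r_{\iota(\sigma)}$, where $\iota\colon\sigma\mapsto\sigma_0\sigma^{-1}$ is an involution of $\cl(\calO)$. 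Thus Frobenius on the roots is realized by $\iota$, and $r_\sigma\in\F_p$ iff $r_\sigma=r_{\iota(\sigma)}$. Because $h$ is odd, squaring is bijective and $\iota$ has the unique fixed point $\sigma_A$ with $\sigma_A^2=\sigma_0$; this $\sigma_A$ is exactly the element producing the unique degree-$1$ prime of $M$ (equivalently $D_{\sigma_A^{-1}\fP_0}=\langle\tau\rangle$, cf. Lemma~\ref{lem:central}). The key lemma I would prove is that $j_0:=r_{\sigma_A}$ is the \emph{only} simple $\F_p$-root of $\overline{H}_D$: for any $\F_p$-root $r$ the fibre $\{\sigma:r_\sigma=r\}$ is $\iota$-stable, so its size (the multiplicity of $r$) is odd precisely when it contains $\sigma_A$, i.e. when $r=j_0$. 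Since $i_p\leq 2$ forces all multiplicities to be $\leq 2$ by Theorem~\ref{them:ip<4}, $j_0$ has odd multiplicity $\leq 2$, hence is simple, while every other $\F_p$-root has even multiplicity.

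With this lemma the case $i_p=1$ and the $1728$-subcase of $i_p=2$ are immediate, since there Theorem~\ref{them:ip<4} already locates the multiple root in $\F_p$. For $i_p=1$ there is one double root $j_1\in\F_p\setminus\{0,1728\}$ and, by the lemma, one simple $\F_p$-root $j_0\neq j_1$; the remaining $h-3$ roots are simple in $\F_{p^2}\setminus\F_p$, so they pair off under Frobenius into $\tfrac{h-3}{2}$ irreducible quadratics, giving $\overline{H}_D=(x-j_0)(x-j_1)^2\,p_1\cdots p_{(h-3)/2}$. The $i_p=2$ subcase with a double root at $1728$ is identical with $1728$ in place of $j_1$, yielding $p_0(x)=x-1728$; in each case the degree count $1+2+(h-3)=h$ checks out.

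The main obstacle is the remaining subcase of $i_p=2$: two distinct double roots $r,r'\in\F_{p^2}\setminus\{0,1728\}$, whose field of definition Theorem~\ref{them:ip<4} does not decide. Since the Frobenius conjugate of a double root is again a double root, the two roots are either a Frobenius-conjugate pair in $\F_{p^2}\setminus\F_p$ (giving one irreducible quadratic $p_0$ with $p_0^2\mid\overline{H}_D$) or both $\F_p$-rational (giving $(x-r)^2(x-r')^2$). The involution/parity argument above is consistent with \emph{both} possibilities, so it cannot settle the question; what must be ruled out is the existence of two distinct $\F_p$-rational double roots. I expect this to require the explicit description of $\End_{W/\pi^n}(E)$ together with the norm equation \eqref{eq:solution}: an $\F_p$-rational supersingular reduction carries $\Z[\sqrt{-p}]$ in its endomorphism ring, and for $|D|=q$ or $4q$ with $q$ prime the solutions $(\cc,\fd)$ of $N(\cc)+pN(\fd)=|D|$ at $n=1$ should be too constrained to yield two independent such $\F_p$-points. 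Establishing that the two double roots must therefore form a conjugate pair, so that $\deg p_0=2$, is the crux; once it is in hand, the count $1+4+(h-5)=h$ completes the proof.
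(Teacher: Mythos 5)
Your factorization bookkeeping is sound, and your involution argument is a genuinely different, self-contained route to the key counting step. The paper obtains ``exactly one simple linear factor, all other simple factors quadratic'' by citing Cohen's Proposition 6.2.1 --- each \emph{simple} factor $g$ of $\overline{H}_D$ produces a prime $(p,g(j_D))$ of norm $p^{\deg g}$ dividing $p\calO_M$ --- then feeding in the splitting $p\calO_M=\fp_1\cdots\fp_{(h+1)/2}$ with exactly one degree-one prime from Corollary~\ref{coro:O(q)}(2)(i), and using that $h_D$ is odd; it then invokes Theorem~\ref{them:ip<4} for the multiplicities, exactly as you do. Your replacement --- Frobenius on the roots $r_\sigma=\sigma(j_D)\bmod\fP_0$ acts as $\iota(\sigma)=\sigma_0\sigma^{-1}$, which for odd $h$ has the unique fixed point $\sigma_A$, so the fibre-parity argument identifies $j_0=r_{\sigma_A}$ as the unique odd-multiplicity $\F_p$-root --- is correct (the relation $r_\sigma^p=r_{\iota(\sigma)}$ and the identification of $\sigma_A$ with the degree-one prime both check out), and it actually yields slightly more than the paper's Cohen-based step: every $\F_p$-root other than $j_0$ has \emph{even} multiplicity, not merely that at most one linear factor is simple. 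Both routes then settle the case $i_p=1$ and the $1728$ subcase identically.

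On your ``crux'': you have read the target as stronger than what is stated. The corollary asserts only that $p_0$ ``is either of degree $2$ or $=x-1728$'', and pointedly reserves ``monic irreducible'' for \emph{all other} $p_*$; irreducibility of $p_0$ is not claimed. So in the remaining $i_p=2$ subcase your own dichotomy already finishes the proof: if the two double roots are a Frobenius-conjugate pair, $p_0$ is an irreducible quadratic; if both were $\F_p$-rational, then $p_0=(x-r)(x-r')$ is a reducible monic quadratic and the displayed shape $(x-j_0)p_0(x)^2p_1(x)\cdots p_{(h-5)/2}(x)$ still holds --- consistent with your parity lemma (both such roots have even multiplicity) and with the prime splitting (each rational double root lies over a degree-two prime of $\calO_M$, and the prime count $1+2+\frac{h-5}{2}=\frac{h+1}{2}$ checks). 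You are right that neither Theorem~\ref{them:prime factorization}, nor Theorem~\ref{them:ip<4}, nor the involution argument can exclude two rational double roots; but the paper's proof (``the remaining part is obvious'') makes no attempt to exclude this configuration either, and the statement is phrased so as not to need it. Your proposed norm-equation analysis via $\End_{W/\pi^n}(E)$ would be required only for that strengthening, which the paper does not assert. One minor point you handled correctly but should keep explicit: applying Theorem~\ref{them:ip<4} needs $p\geq 5$ and $p\nmid D$, which hold here since $p$ is inert in $K$ (so $p\neq q$) and these hypotheses are implicit in the corollary.
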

\begin{proof}
   	For any simple factor $g(x)$ of $\overline{H}_D(x)$ over $\F_p[x]$, we have $(p,g(j_D))$ divides $p\calO_M$ with norm $p^{\deg g}$ by~\cite[Proposition 6.2.1]{Coh96}. Thus, we deduce that $\overline{H}_D(x)$ has exactly one simple linear factor and all other simple factors are of degree $2$ by Theorem~\ref{them:prime factorization}(1-i) and noting that in this case $h_D$ is odd. If $i_p\leq 2$, the multiplicity of all multiple roots of $\overline{H}_D(x)$ is $2$ according to Theorem~\ref{them:ip<4}-(1),(2), and the remaining part is obvious.
\end{proof}

%	For the order $\calO$ defined in the above corollary,  Castryck et.al. \cite{CPV20} studied the factorization of $\overline{H}_D(x)$ in $\F_p[x]$ when $p>4q$. We note that the condition $p>4q$ is sufficient for $p\nmid n_D$ according to \cite{Zagier85}.

%\begin{example}
%	For $i_p>3$, we note that the multiplicity of a multiple root of $H_D(x)$ in $\F_{p^2}$ can be larger than 2. 

%	Let $q=1783, p=577$ and $K=\Q(\sqrt{-1783})$. Let $H_D(x)$ be the Hilbert class polynomial of $\calO_K$.  By computation, we have $d_{D,p}=2i_p=12$, and the $p-$part prime decomposition of $H'(j_D)\calO_M$ is $\fp_0^2\fp_1^2\fp_2\fp_3\fp_4$, where only $\fp_0$ is of degree 1; moreover,
%	\[H_D(x)=(x+490)^3(x+451)^2(x^2+190x+11)^2(x^2+467x+451)(x^2+477x+232)(x^2+576x+244) \bmod 577.\]
%	As we can see, the root of $-490 \in \F_{577}$ has multiplicity 3.
%\end{example}

%%%%%%%%%%%%%%%%%%%%%%%%%%%%%%%%%%%%%%%%%%%%%%%%%%%%%%%%%%%%%%%%%%%%%%%%%%%%%%%%%%%%%%%%%%%%%%%%%%%%%%%%%%%%%
\section{Key space of OSIDH}
%%%%%%%%%%%%%%%%%%%%%%%%%%%%%%%%%%%%%%%%%%%%%%%%%%%%%%%%%%%%%%%%%%%%%%%%%%%%%
In this section, we analyze the key space of OSIDH by combining the parameters given by Onuki for the protocol to work. We refer to \cite{Onuki, CK19} for the details of OSIDH. 

Let $\calO_0$ be an order of discriminant $D$, and $p$ a non-split prime in $K=\Q(\sqrt{D})$ such that $p\nmid [\calO_K:\calO_0]$. Let $\ell\neq p$ be another prime. For $n\geq 0$, let $\calO_{n}$ be the suborder of $\calO_0$ with conductor $\ell^n$ and $D_n$ be its discriminant. The OSIDH protocol is based on the commutative group action of $\cl(\calO_n)$ on $\rho\big( \Ell(\calO_n)\big)$(see  \S~\ref{sec:osidh}) as follows: starting from a public known pair $(E_n,\iota_n)\in\rho\big( \Ell(\calO_n)\big)$, Alice and Bob choose secret keys $[\fa]$ and $[\fb]$ respectively in $\cl(\calO_n)$; then they compute $[\fa]*(E_n,\iota_n)$ resp. $[\fb]*(E_n,\iota_n)$, and publish their results; Alice and Bob compute $[\fa]*[\fb]*E_n$ resp. $[\fb]*[\fa]*E_n$,  and take $j([\fa][\fb]*E_n)$ as the shared key. 

The computation of $[\fa]*(E_n,\iota_n)$ is related to $E_n[\fa]$ which may be defined over large field extension of $\F_p$ without some restrictions on $p$ such as those in SIDH or CSIDH. To avoid the time-consuming computation of the action $[\fa]*E_n$, Col\`o and Kohel proposed the modular $\ell$-isogeny ladders of length $n$ to get the $j$-invariant of $[\fa]*E_n$.
Assume that $[\fq]$ is the maximal generator of $\cl(\calO_n)$ with norm $q$. Onuki proved that the approach of computation of $\ell$-isogeny ladders by modular polynomials always works if $p>q|D_n|$.  By combining the factorization of $\overline{H}_{D_n}(x)$, we have

\begin{proposition}
	As notations are defined above. If $p>|D_n|=\ell^{2n}|D|$, then the size of the key space of OSIDH is $h_{D_n}$ which is less than $O(\sqrt{p}\log p)$. Moreover, let $F_n$ be the genus field of $\calO_n$, $F_n^{+}=F^{n}\cap \R$, and $\mu_n=\mu_{D_n}$. If $p$ splits completely in $F_n^{+}$, there exist $2^{\mu_n-1}$ $j$-invariants among the key space in $\F_p$.
\end{proposition}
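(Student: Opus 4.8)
The plan is to identify the OSIDH key space, taken up to $\Fbar_p$-isomorphism, with the set of distinct roots of $\overline{H}_{D_n}(x)$ in $\F_{p^2}$, and then to read off both its cardinality and its $\F_p$-rational part from the factorization results of \S4 together with a classical bound on imaginary quadratic class numbers. Throughout, $\calO_n$ has discriminant $D_n=\ell^{2n}D$ and conductor $f_n=\ell^{n}[\calO_K:\calO_0]$.

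First I would check the standing hypotheses of the factorization theorem. Since $\ell\neq p$ and $p\nmid[\calO_K:\calO_0]$, we get $p\nmid f_n$; and the hypothesis $p>|D_n|$ forces $p\nmid D_n$, because a nonzero multiple of $p$ cannot have absolute value smaller than $p$. As $D_K\mid D_n$, this gives $p\nmid D_K$, so the non-split prime $p$ is in fact inert in $K$. Thus, once we also know $p\nmid n_{D_n}$, we are in case $(A)(1)$ of Theorem~\ref{them:facto. of class [polynomial]}, and moreover $|D_n|<p$ rules out the exceptional discriminants $D_n\in\{-p,-2p,-4p\}$.

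The heart of the argument is separability. Because $D_n>-p$, the valuation estimate recalled just before \eqref{eq:iP(1)} gives $v_{\fP}(j_{D_n}-j_{D_n}')=0$ for every prime $\fP\mid p$ of $L$ and every pair of distinct conjugates $j_{D_n}\neq j_{D_n}'$; hence no two roots of $H_{D_n}$ collide modulo $\fP$, and $\overline{H}_{D_n}(x)$ is squarefree of degree $h_{D_n}$. In particular $p\nmid n_{D_n}$ by \eqref{eq:disc H_D}. The lemma of \S\ref{sec:osidh} shows that $\cl(\calO_n)$ acts freely and transitively on $\rho(\Ell(\calO_n))$, so the key space has exactly $h_{D_n}$ elements; separability then guarantees that these have pairwise distinct $j$-invariants, giving the asserted cardinality. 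For the upper bound I would invoke the classical estimate $h_{D'}=O(\sqrt{|D'|}\,\log|D'|)$, valid for any negative discriminant $D'$ (Dirichlet's class number formula together with $L(1,\chi_{D'})=O(\log|D'|)$); applied with $D'=D_n$ and $|D_n|<p$ it yields $h_{D_n}=O(\sqrt{p}\,\log p)$.

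For the $\F_p$-rational count I would apply Theorem~\ref{them:facto. of class [polynomial]}$(A)(1)$ to $D_n$ directly: since $p\nmid D_K$ and $p$ splits completely in $F_n^{+}$, the polynomial $\overline{H}_{D_n}(x)$ is a product of $2^{\mu_n-1}$ distinct linear factors and $\tfrac{h_{D_n}-2^{\mu_n-1}}{2}$ distinct irreducible quadratics, and the $2^{\mu_n-1}$ linear factors are exactly the key-space $j$-invariants lying in $\F_p$. The only real obstacle is the separability step: it is what upgrades the oriented count $\#\rho(\Ell(\calO_n))=h_{D_n}$ to a count of \emph{distinct} $j$-invariants and simultaneously supplies the hypothesis $p\nmid n_{D_n}$ needed to invoke \S4. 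Once this is in place, the remaining assertions follow immediately from the factorization theorem and the standard class-number bound.
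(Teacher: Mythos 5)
Your proposal is correct and takes essentially the same route as the paper's own proof: squarefreeness of $\overline{H}_{D_n}(x)$ from the valuation fact $v_{\fP}(j_{D_n}-j_{D_n}')=0$ for $D_n>-p$ in \S 4.2, identification of the key space (up to $\Fbar_p$-isomorphism) with the set of roots of $\overline{H}_{D_n}(x)$ in $\F_{p^2}$, the classical bound $h_{D_n}=O(\sqrt{|D_n|}\log|D_n|)$, and Theorem~\ref{them:facto. of class [polynomial]}(A)(1) for the count of $\F_p$-rational roots. Your additional verifications --- that $p$ is inert since $p>|D_n|$ forces $p\nmid D_K$, that $p\nmid n_{D_n}$ follows from squarefreeness via \eqref{eq:disc H_D}, and that the exceptional discriminants $\{-p,-2p,-4p\}$ are excluded --- are bookkeeping the paper leaves implicit rather than a different argument.
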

\begin{proof}
	For prime $p>|D|\ell^{2n}=|D_n|$, we know that $\overline{H}_{D_n}(x)$ has no multiple roots in $\F_{p^2}$ by \S 4.2. By the reduction map, we have
	\[\{\text{roots\ of}\ H_{D_n}(x) \bmod p \ \text{in}\ \F_{p^2}\}= \rho\big( \Ell(\calO_n)\big)/\sim.\]
	Thus, the size of the key space is $h_{D_n}=\#\cl(\calO_n)$, which is less than $\sqrt{|D_n|}\log |D_n|$(see \cite[Exercise 5.27]{Coh96})  whence is up bounded by $O(\sqrt{p}\log p)$. 
	 The second assertion is equivalent to counting the number of $\F_p$-roots of $\overline{H}_{D_n}(x)$ which is clear by Theorem~\ref{them:facto. of class [polynomial]}(A)-(1).  
\end{proof}

%In order to achieve the initial goal of OSIDH such that its key space could cover an arbitrary proposition of all isomorphism classes of supersingular elliptic curves, one should improve the lower bound of $p$ firstly. We expect our work can be used to improve OSIDH protocol or in other related problems in isogeny-based cryptography, and leave them for future work.

 %%%%%%%%%%%%%%%%%%%%%%%%%%%%%%%%%%%%%%%%%%%%%%%%%%%%%%%%%%%%%%%%%%%%%%%%%%%%%%%%%%%%%%%%%%%%%%%%%%%%%%%%%%%	
\vskip 1cm
\newcommand{\etalchar}[1]{$^{#1}$}

%%%%%%%%%%%%%%%%%%%%%%%%%%%%%%%%%%%%%%%%%%%%%%%%%%%%%%%%%%%%%%%%%%%%%%%%%%%%%%%%%%%%%%%%%%%%%%%%%%%%%%%%%%%%	
\end{document}